\keywords{Grobner bases, symbolic preprocessing, Macaulay matrices, GPU, sparse linear algebra, finite fields}
\providecommand{\Fp}{\mathbb{F}_p}
\providecommand{\Rring}{R}
\providecommand{\Mon}{\mathcal{M}}
\providecommand{\LT}{\operatorname{LT}}
\providecommand{\LM}{\operatorname{LM}}
\providecommand{\LC}{\operatorname{LC}}
\providecommand{\lcm}{\operatorname{lcm}}
\providecommand{\supp}{\operatorname{supp}}
\providecommand{\SOA}{\textsc{SoA}\xspace}
\providecommand{\SIMT}{\textsc{SIMT}\xspace}
\providecommand{\GPU}{\textsc{GPU}\xspace}
\begin{document}

\title[Massively Parallel Reductions]{Massively Parallel Reductions in Multivariate Polynomial Systems: Bridging the Symbolic Preprocessing Gap on GPGPU Architectures}

\author[C.~Gokavarapu]{Chandrasekhar Gokavarapu}
\address{Lecturer in Mathematics, Government College (A), Rajahmundry, Andhra Pradesh, India}
\email{chandrasekhargokavarapu@gmail.com}

\begin{abstract}
Gr\"obner basis computation over multivariate polynomial rings remains one of the most powerful
yet computationally hostile primitives in symbolic computation. While modern algorithms
(\emph{Faug\`ere}-type $F_4$ and signature-based $F_5$) reduce many instances to
large sparse linear algebra over finite fields, their dominant cost is not merely elimination but the
\emph{symbolic preprocessing} that constructs Macaulay-style matrices whose rows encode shifted reducers.
This phase is characterized by dynamic combinatorics (monomial discovery, sparse row assembly,
and deduplication) and is typically memory-latency bound, resisting naive parallelization.

This article develops a rigorous synthesis that reframes $S$-polynomial reduction as syzygy discovery:
row construction is a structured map from module relations to the kernel of a massive, sparse,
highly non-random Macaulay matrix $A$ over $\Fp$. Building on this viewpoint, we propose a
\GPU-targeted architecture that (i) converts dynamic symbolic data structures into static,
two-pass allocations via prefix-sum planning; (ii) enforces coalesced memory access through
structure-of-arrays polynomial layouts and sorted monomial dictionaries; and (iii) integrates
finite-field arithmetic kernels (Montgomery/Barrett-style reduction) at register granularity.
On the linear-algebra side, we explore the transition from classical Gaussian elimination to
parallel structured Gaussian elimination (PSGE) and to Krylov-type kernel solvers
(Block Wiedemann/Lanczos) that better match \GPU throughput while controlling fill-in.

The result is a principled bridge between algebraic syzygy theory and \SIMT hardware constraints,
isolating the true bottleneck and providing a pathway to massively parallel reductions for
multivariate polynomial systems.
\end{abstract}

\maketitle

\section{Introduction: Complexity, Structure, and Hardware Mismatch}
Let $\Rring=\Fp[x_1,\dots,x_n]$ with a fixed term order $\prec$.
Computing a Gr\"obner basis for an ideal $I=\langle f_1,\dots,f_m\rangle\subset \Rring$
is a foundational operation in computational algebra and underlies elimination, solving, and
applications in coding and cryptanalysis (see, e.g., \cite{CLO25}).
Despite decades of progress, the worst-case behavior remains severe: ideal membership and
related tasks can exhibit doubly-exponential complexity in the number of variables $n$
\cite{MayrMeyer82}, and degree growth bounds already predict rapid blow-up in general
\cite{Dube90}. Importantly, this is not only a pessimistic asymptotic: the algorithmic burden
is \emph{structural}. A single run must simultaneously manage (i) monomial divisibility
lattices and term orders, (ii) syzygy/module structure that governs reductions, and
(iii) large sparse linear algebra over $\Fp$ (classically rooted in Buchberger’s framework
\cite{Buchberger06}).

Modern performance gains come from turning many reductions into \emph{batched} sparse
elimination steps. The $F_4$ algorithm aggregates reducers into a Macaulay-style matrix and
performs row reduction to realize many polynomial reductions at once \cite{F4}.
The $F_5$ family further suppresses redundant work via \emph{signatures} that track module
provenance and enforce admissible reductions \cite{F5}.
However, even when elimination itself is efficient, parallel speedups are often capped by the
most irregular stage of the pipeline: \emph{symbolic preprocessing}—constructing the sparse
matrix (rows, monomial dictionary, and column ordering) to be reduced \cite{F4}.

This stage is dominated by pointer-heavy, dynamically sized structures (priority queues, hash
tables, variable-length row buffers) and irregular memory traffic. Such behavior is a poor fit
for \GPU execution, where performance hinges on uniform control flow and coalesced memory
access in a SIMT model \cite{NVIDIAcuda25}.
The mismatch is well-known in sparse GPU kernels: irregular row lengths and scattered accesses
force divergence and reduce effective bandwidth \cite{BellGarland09}.
Thus the core obstacle is not merely ``faster elimination on the \GPU,'' but making the
\emph{construction} of the elimination instance GPU-regular.

\paragraph{Thesis.}
We interpret symbolic preprocessing as a structured \emph{syzygy-to-kernel map}:
reducing a batch of $S$-polynomials is equivalent to producing kernel elements (module relations)
of a highly structured sparse matrix $A$ over $\Fp$ \cite{Buchberger06,F4}.
Consequently, the central GPU problem is to construct $A$ and its monomial dictionary using
divergence-minimized kernels and flat-array layouts—turning dynamic symbolic data into static,
scan-planned memory layouts (prefix-sums/scan being a canonical tool for this conversion)
\cite{Blelloch90}.

\paragraph{Contributions (conceptual).}
\begin{itemize}[leftmargin=2em]
\item We formalize symbolic preprocessing as a deterministic map from selected module terms
to a sparse Macaulay matrix $A$ together with a monomial dictionary $\mathcal{T}$, and we show
how this realizes Buchberger-style reduction as kernel computation.
\item We propose a \GPU-suitable design principle: \emph{dynamic-to-static compilation} of symbolic
data via two-pass allocation (count $\rightarrow$ prefix-sum plan $\rightarrow$ fill),
replacing dynamic linked structures by flat arrays.
\item We describe coalesced sparse polynomial and sparse matrix layouts that minimize warp divergence,
including sorted monomial keys and segmented reductions.
\item We analyze finite-field arithmetic kernels and discuss modular reduction strategies compatible
with register-level execution.
\item We synthesize elimination and kernel-solving strategies (PSGE vs.\ Krylov methods) under
\GPU constraints, highlighting when iterative methods dominate.
\end{itemize}

\section{Algebraic Foundations: From Buchberger to Syzygies, $F_4$, and $F_5$}
\label{sec:foundations}

This section fixes notation and highlights the algebraic viewpoint that will drive the
\GPU design in later sections: Gr\"obner basis reduction can be phrased as \emph{module reduction},
and the matrix built during symbolic preprocessing is a concrete presentation of that module data
\cite{Buchberger06,CLO25,EderFaugere17}.

\subsection{Monomials, leading data, and $S$-polynomials}
Let $\Mon$ denote the set of monomials in $\Rring$.
For $f\in \Rring\setminus\{0\}$, write $\LT(f)=\LC(f)\LM(f)$ for leading term/monomial/coefficient
with respect to $\prec$ \cite{CLO25}.

\begin{defi}[$S$-polynomial]
\label{def:spoly}
For $f,g\in \Rring\setminus\{0\}$, let $L=\lcm(\LM(f),\LM(g))$ and define
\[
S(f,g)=\frac{L}{\LT(f)}f-\frac{L}{\LT(g)}g.
\]
\end{defi}

Buchberger's criterion states that a finite set $G$ is a Gr\"obner basis (for $\prec$) iff every
$S(f,g)$ reduces to $0$ modulo $G$ \cite{Buchberger06,CLO25}.
Algorithmically, the computation is thus driven by (i) selecting critical pairs and
(ii) repeatedly reducing their $S$-polynomials until normal forms stabilize.

\subsection{Syzygy module viewpoint and signatures}
Introduce the free module $F=\Rring^m$ with basis $e_1,\dots,e_m$ and the surjection
$\varphi:F\to I$ given by $\varphi(e_i)=f_i$.
Then $\ker(\varphi)$ is the \emph{first syzygy module} of $(f_1,\dots,f_m)$ \cite{Eisenbud95}.
A key observation (formalized via Schreyer-type module term orders) is that Buchberger-style
pair handling corresponds to constructing specific module relations whose images under $\varphi$
cancel leading terms \cite{Schreyer80,Eisenbud95}.

Signature-based methods make this module structure explicit.
A \emph{signature} is a module monomial $t e_i\in \Mon\cdot\{e_1,\dots,e_m\}$ attached to
a derived polynomial, recording its origin in $F$ and inducing a well-founded order that
constrains admissible reductions \cite{FaugereF5,EderFaugere17}.
Operationally, signatures implement a disciplined traversal of $\ker(\varphi)$: reductions are
allowed only when they do not increase (or do not violate) the signature order, which prunes many
reductions to zero and many redundant pairs \cite{EderFaugere17}.

\begin{rem}[From $S$-polynomials to module reduction]
The instruction ``reduce $S(f,g)$'' can be read as: construct a module element
$s\in F$ whose image $\varphi(s)$ has cancellative leading term (as in
Definition~\ref{def:spoly}), then perform reductions until no leading monomial is divisible by
$\LM(G)$. This is intrinsically a \emph{module reduction} problem \cite{Schreyer80,Eisenbud95}.
\end{rem}

\subsection{$F_4$ as batched module reduction; $F_5$ as signature-constrained reduction}
$F_4$ replaces many scalar reductions by one elimination step: from a chosen batch of critical
pairs, it constructs a Macaulay-type matrix whose rows are suitable multiples $t\cdot g$
of current basis elements $g\in G$ and whose columns are the monomials appearing in the batch;
row reduction over $\Fp$ performs all reductions simultaneously \cite{FaugereF4}.
The cost driver is precisely the \emph{symbolic preprocessing} that determines the column set
(mononomial dictionary) and enumerates the row multiples before any linear algebra happens.

$F_5$ adds signatures to control which reducers may be applied and to discard many useless pairs,
but once a batch is fixed, the computational primitive is the same: build a sparse matrix encoding
admissible (signature-safe) shifted reducers and compute its row-reduced form over $\Fp$
\cite{FaugereF5,EderFaugere17}.
This common endpoint is what enables a unified hardware view in later sections: \emph{both}
algorithms funnel work into constructing structured sparse matrices and extracting kernel/row-space
information from them \cite{FaugereF4,EderFaugere17}.

\section{Reduction as Kernel Computation of a Sparse Macaulay Matrix}
\label{sec:kernel-view}

Section~\ref{sec:foundations} emphasized that Gr\"obner basis reduction is fundamentally a
\emph{module reduction} process. In batched algorithms (notably $F_4$), this module structure is
materialized as a sparse Macaulay-type matrix whose row operations encode simultaneous reductions
\cite{FaugereF4}. The point of this section is to make the algebra--linear-algebra correspondence
precise: symbolic preprocessing fixes a finite monomial dictionary, and the resulting coefficient
map turns polynomial relations (syzygies) into kernel vectors of a sparse matrix
\cite{Eisenbud95,Schreyer80}.

\subsection{From shifted reducers to a coefficient operator}
Fix a degree bound $d$ (e.g.\ in a degree-driven strategy) and let $\mathcal{T}_d\subset\Mon$ be a
finite monomial set appropriate for the current batch (typically all monomials of degree $\le d$,
or a closure of the monomials actually generated in the batch).
Write $\mathcal{T}=\{m_1\succ m_2 \succ \cdots \succ m_N\}$ for $\mathcal{T}_d$ sorted in descending
order. Consider a finite list of shifted polynomials
\[
R=\{\,t_i g_{k_i}\,\}_{i=1}^r,
\qquad g_{k_i}\in G,\ \ t_i\in\Mon,
\]
chosen so that $\supp(t_i g_{k_i})\subseteq \mathcal{T}$ for each $i$.

The key algebraic device is the \emph{coefficient embedding} determined by $\mathcal{T}$.
Let $\Rring_{\mathcal{T}}\subset \Rring$ denote the $\Fp$-vector space of polynomials supported in
$\mathcal{T}$, i.e.\ $\Rring_{\mathcal{T}}=\{f\in\Rring:\supp(f)\subseteq\mathcal{T}\}$.
Define the linear isomorphism
\[
\mathrm{coeff}_{\mathcal{T}}:\Rring_{\mathcal{T}}\longrightarrow \Fp^N,
\qquad
f=\sum_{j=1}^N a_j m_j \ \longmapsto\ (a_1,\dots,a_N)^\top .
\]
Now define the $\Fp$-linear map
\[
\Psi:\Fp^r\longrightarrow \Rring_{\mathcal{T}},
\qquad
\Psi(e_i)=t_i g_{k_i},
\]
and compose:
\[
\Fp^r \xrightarrow{\ \Psi\ } \Rring_{\mathcal{T}} \xrightarrow{\ \mathrm{coeff}_{\mathcal{T}}\ } \Fp^N.
\]
The resulting linear operator is represented (in the standard bases) by the sparse matrix $A$
defined next.

\begin{defi}[Sparse Macaulay matrix for a batch]
\label{def:macaulay-batch}
With $\mathcal{T}=\{m_1\succ\cdots\succ m_N\}$ as above, define $A\in\Fp^{r\times N}$ by
\[
A_{i,j}=[m_j]\,(t_i g_{k_i}),
\]
the coefficient of $m_j$ in the polynomial $t_i g_{k_i}$.
Equivalently, the $i$-th row of $A$ is $\mathrm{coeff}_{\mathcal{T}}(t_i g_{k_i})^\top$.
\end{defi}

\noindent
Thus $A$ is nothing but the matrix of $\mathrm{coeff}_{\mathcal{T}}\circ\Psi$.
This viewpoint separates the roles of the two difficult steps:
\begin{itemize}[leftmargin=2em]
\item choosing $\mathcal{T}$ and the shifts $\{t_i\}$ (a symbolic/module decision), and
\item performing linear algebra on $A$ (a finite-field numerical decision).
\end{itemize}
In $F_4$, row reduction on $A$ implements many polynomial reductions simultaneously
\cite{FaugereF4}.

\subsection{Kernel vectors as (truncated) syzygies}
A vector $v\in\Fp^r$ specifies an $\Fp$-linear combination of the shifted rows:
\[
v^\top A \;=\;\sum_{i=1}^r v_i\,\mathrm{coeff}_{\mathcal{T}}(t_i g_{k_i})^\top
\;=\;\mathrm{coeff}_{\mathcal{T}}\!\left(\sum_{i=1}^r v_i (t_i g_{k_i})\right)^\top .
\]
Therefore $v^\top A=0$ means precisely that the polynomial $\sum_i v_i(t_i g_{k_i})$ has
\emph{all coefficients zero on the dictionary $\mathcal{T}$}.
If $\mathcal{T}$ contains every monomial that can occur in that linear combination,
this is an actual polynomial identity; otherwise it is a \emph{truncated} (dictionary-level)
syzygy. This is exactly the algebraic content behind ``kernel computation'' in the $F_4$ matrix step.

\begin{prop}[Kernel equals syzygies under dictionary closure]
\label{prop:kernel-syzygy}
Assume $\mathcal{T}$ contains $\bigcup_{i=1}^r \supp(t_i g_{k_i})$.
Let $A$ be as in Definition~\ref{def:macaulay-batch}. Then, for $v\in\Fp^r$,
\[
v^\top A=0
\quad\Longleftrightarrow\quad
\sum_{i=1}^r v_i\,(t_i g_{k_i})=0 \ \text{in}\ \Rring.
\]
\end{prop}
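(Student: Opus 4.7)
The plan is to lean on the factorization ``$A$ represents $\mathrm{coeff}_{\mathcal{T}}\circ\Psi$'' recorded right after Definition~\ref{def:macaulay-batch}, together with the $\Fp$-linearity computation displayed immediately before the proposition. Concretely, for any $v\in\Fp^r$ one has
\[
v^\top A \;=\; \sum_{i=1}^r v_i\,\mathrm{coeff}_{\mathcal{T}}(t_i g_{k_i})^\top \;=\; \mathrm{coeff}_{\mathcal{T}}(p)^\top,
\qquad p:=\sum_{i=1}^r v_i\,(t_i g_{k_i}),
\]
where the second equality is just $\Fp$-linearity of $\mathrm{coeff}_{\mathcal{T}}$ applied inside the ring $\Rring$. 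So the whole proposition reduces to showing that, under the closure hypothesis, $\mathrm{coeff}_{\mathcal{T}}(p)=0$ if and only if $p=0$ in $\Rring$.

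For that equivalence, I would first use the closure assumption $\mathcal{T}\supseteq\bigcup_i \supp(t_i g_{k_i})$ to note that $\supp(p)\subseteq\mathcal{T}$, hence $p\in \Rring_{\mathcal{T}}$; this is what allows $\mathrm{coeff}_{\mathcal{T}}$ to be applied to $p$ without any silent truncation. The map $\mathrm{coeff}_{\mathcal{T}}:\Rring_{\mathcal{T}}\to\Fp^N$ was already stated to be a linear isomorphism, with inverse sending $(a_1,\dots,a_N)$ to $\sum_j a_j m_j$, which makes sense because the monomials $m_1,\dots,m_N$ are distinct and therefore $\Fp$-linearly independent in $\Rring$. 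Its kernel is thus trivial, so $\mathrm{coeff}_{\mathcal{T}}(p)=0\Leftrightarrow p=0$. Combining with the displayed identity yields both implications of the proposition.

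The main obstacle, or rather the point the hypothesis is engineered to exclude, is the failure mode of the forward direction when $\mathcal{T}$ does not contain $\supp(p)$. There, a nonzero $p$ could have its entire support outside $\mathcal{T}$ (say, produced by cancellations on $\mathcal{T}$ that leave residual monomials elsewhere), so $\mathrm{coeff}_{\mathcal{T}}(p)=0$ while $p\neq 0$; the corresponding $v$ would be a spurious kernel vector of $A$. This is the ``truncated syzygy'' phenomenon flagged in the discussion preceding the proposition, and guarding against it is precisely why the statement assumes dictionary closure. No further algebraic machinery is required: once the isomorphism property of $\mathrm{coeff}_{\mathcal{T}}$ and the inclusion $\supp(p)\subseteq\mathcal{T}$ are in hand, the proof is a direct unwinding of the definitions.
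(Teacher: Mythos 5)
Your proof is correct and follows essentially the same route as the paper's: both use the identity $v^\top A=\mathrm{coeff}_{\mathcal{T}}\bigl(\sum_i v_i(t_i g_{k_i})\bigr)^\top$ and then conclude via injectivity of $\mathrm{coeff}_{\mathcal{T}}$ on polynomials supported in $\mathcal{T}$, which the closure hypothesis guarantees applies to the combination. Your write-up is merely more explicit about why $p\in\Rring_{\mathcal{T}}$ and why the map is an isomorphism there; no substantive difference.
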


\noindent\emph{Proof.}
By construction,
\(
v^\top A=\mathrm{coeff}_{\mathcal{T}}\!\left(\sum_i v_i(t_i g_{k_i})\right)^\top .
\)
Since $\mathcal{T}$ contains the full support of $\sum_i v_i(t_i g_{k_i})$, the coefficient map
$\mathrm{coeff}_{\mathcal{T}}$ is injective on that polynomial. Hence $v^\top A=0$ iff all its
coefficients vanish, i.e.\ the polynomial is identically zero. \qed

\begin{rem}[Truncation and why symbolic preprocessing matters]
\label{rem:truncation}
If $\mathcal{T}$ is \emph{not} support-closed, then $v^\top A=0$ only certifies that the
combination vanishes \emph{after projection} onto $\Rring_{\mathcal{T}}$; monomials outside
$\mathcal{T}$ are invisible.
Symbolic preprocessing exists to avoid precisely this pathology: it expands $\mathcal{T}$ to be
closed under the monomials generated by one-step reductions (and, in signature regimes, under
admissible reductions) so that linear algebra on $A$ corresponds to genuine module reduction
rather than an artifact of an underspecified dictionary \cite{Schreyer80,Eisenbud95}.
\end{rem}

\subsection{Interpretation for elimination and reduction}
The row space of $A$ is the $\Fp$-span of shifted reducers \emph{restricted to} $\mathcal{T}$, while
row reduction selects canonical representatives of this span under the chosen monomial order on
columns. Dually, kernel vectors encode $\Fp$-linear relations among the shifted reducers; these
are the (truncated) syzygies that witness cancellations of leading monomials in a batch.
This is the structural reason the reduction step becomes sparse linear algebra: reduction is the
construction of relations that eliminate leading terms, and batching turns many such eliminations
into elimination/kernel computation on a single sparse operator $A$ \cite{FaugereF4}.

\section{Parallelism in Multivariate Polynomial System Solving}
\label{sec:parallelism}

The algebraic reduction problem in Sections~\ref{sec:foundations}--\ref{sec:kernel-view} admits an
explicit linear-algebra interface (a sparse Macaulay matrix $A$ with a monomial dictionary
$\mathcal{T}$). The practical question is where parallelism can be extracted \emph{without}
destroying the structural guarantees (term orders, admissibility, signature constraints).
A central lesson of matrix-driven methods is that parallelism is not primarily available at the
granularity of individual reducer steps, but at the granularity of \emph{batches} and \emph{bulk
data movement} that compile symbolic objects into static arrays \cite{FaugereF4,CUDAProgGuide}.

\subsection{Problem statement: symbolic preprocessing as the parallelism choke point}
Recall the $S$-polynomial
\[
S(f,g)=\frac{L}{\LT(f)}f-\frac{L}{\LT(g)}g,
\qquad
L=\lcm(\LM(f),\LM(g)).
\]
In batched algorithms (notably $F_4$, and signature-restricted variants in the spirit of $F_5$),
the reduction of many such expressions is realized by:
\begin{enumerate}[leftmargin=2em]
\item selecting a batch of targets (pairs/signatures) and a set of admissible shifted reducers,
\item constructing a sparse matrix $A\in\Fp^{r\times N}$ whose rows are coefficient vectors of
      the shifted polynomials $t_i g_{k_i}$ over a dictionary $\mathcal{T}$ (Section~\ref{sec:kernel-view}),
\item performing elimination and/or extracting relations (kernel vectors) over $\Fp$,
\item translating reduced rows/relations back to polynomials and updating $G$.
\end{enumerate}
Once $A$ is materialized in flat buffers, step (3) is compute-heavy and aligns well with
throughput-oriented hardware. In contrast, step (2)---\emph{symbolic preprocessing}---is dominated
by irregular memory traffic: it must discover (and deduplicate) monomials, build $\mathcal{T}$ in
term order, and map each sparse polynomial support into column indices. This is the fundamental
scaling barrier for end-to-end parallelism.

\subsection{A pipeline decomposition and where \GPU parallelism actually lives}
A degree-driven, matrix-based pipeline can be written schematically as:
\begin{enumerate}[leftmargin=2em]
\item \textbf{Batch selection (algebraic control):} decide which targets are processed and which
      reducers are admissible (pair criteria, signature constraints).
\item \textbf{Symbolic compilation (irregular data):} compute
      \[
      \mathcal{T}\ \approx\ \bigcup_{i=1}^r \supp(t_i g_{k_i}),
      \qquad
      \Pi=(\texttt{row\_ptr},\texttt{col\_ind},\texttt{val},\texttt{dict\_keys},\texttt{row\_meta}),
      \]
      i.e.\ a sorted dictionary and a sparse layout plan for writing $A$ into contiguous arrays.
\item \textbf{Numeric phase (regular kernels):} perform row reduction / kernel extraction over $\Fp$.
\item \textbf{Reconstruction (algebraic update):} map reduced rows/relations back to polynomials and
      update $G$ (and signature data, if present).
\end{enumerate}
The critical observation is that the objects manipulated in symbolic compilation are \emph{bulk
collections} rather than scalar tasks:
\[
\Big(\supp(t_i g_{k_i})\Big)_{i=1}^r,
\qquad
\bigcup_{i=1}^r \supp(t_i g_{k_i}),
\qquad
\{(t_i,g_{k_i})\}_{i=1}^r,
\]
and these can be processed by data-parallel primitives (scan/sort/merge/unique), provided we avoid
dynamic insertion into global maps.

\subsubsection*{Why naive fine-grain parallelism fails on \GPU}
A tempting strategy is ``one thread per monomial insertion into the dictionary.'' It breaks down
for three structural reasons:
\begin{itemize}[leftmargin=2em]
\item \textbf{Heterogeneous sparsity:} supports $\supp(t_i g_{k_i})$ have widely varying sizes,
      forcing warp divergence and load imbalance.
\item \textbf{Global deduplication:} monomial discovery creates massive duplication; correctness
      requires a global \texttt{unique} in the term order, which introduces synchronization and
      contention if implemented via atomics or hash probes.
\item \textbf{Admissibility filters:} signature constraints and criteria are branch-heavy, so
      executing them per monomial amplifies divergence.
\end{itemize}
These are precisely the patterns that degrade \SIMT performance: irregular control flow and
non-coalesced memory access undercut throughput \cite{CUDAProgGuide}.

\subsection{An algebra--hardware interface: symbolic work as compilation to static arrays}
For the \GPU, the symbolic phase must act as a \emph{compiler} from algebraic batch data to static
array layouts. Concretely, it must output:
\begin{itemize}[leftmargin=2em]
\item \textbf{Dictionary keys:} a sorted monomial list $\mathcal{T}=\{m_1\succ\cdots\succ m_N\}$,
\item \textbf{Row metadata:} the list of shifted reducers $(t_i,g_{k_i})_{i=1}^r$,
\item \textbf{A write plan:} offsets and indices for a sparse matrix representation of $A$
      (e.g.\ CSR/ELL-like hybrids), so that each kernel writes into disjoint, contiguous segments.
\end{itemize}
The design principle is: \emph{replace dynamic growth by two-pass planning}.
First compute per-row (or per-segment) sizes, then compute global offsets by a prefix-sum/scan, and
only then perform a coalesced fill. Prefix-sums are a canonical tool for this ``count $\to$ plan
$\to$ fill'' transformation \cite{Blelloch90}. With $\Pi$ fixed, the downstream linear-algebra
phase is a numeric kernel over $\Fp$ operating on flat arrays.

This interface motivates the formal operator view in the next section: symbolic preprocessing is
the deterministic map that outputs $(\mathcal{T},\mathcal{R},\Pi)$, i.e.\ the dictionary, row set,
and layout plan needed to materialize the sparse operator $A$ from
Definition~\ref{def:macaulay-batch}.


\section{The Symbolic Bottleneck: Formal Definition and Latency Analysis}
\label{sec:symbolic-bottleneck}

Sections~\ref{sec:kernel-view}--\ref{sec:parallelism} isolate the essential interface:
a batch is executed by (i) \emph{symbolically compiling} algebraic data into a sparse operator
$A$ with dictionary $\mathcal{T}$, and then (ii) performing numeric linear algebra over $\Fp$.
This section formalizes the compilation step as an operator and explains, with an explicit cost
decomposition, why it is intrinsically memory/latency dominated on \GPU hardware
\cite{CUDAProgGuide,HongKim09}.

\subsection{Symbolic preprocessing as a deterministic compilation operator}

The role of symbolic preprocessing is to turn a batch specification into a \emph{materialization plan}
for the Macaulay operator of Definition~\ref{def:macaulay-batch}. The guiding constraint is
architectural: a \GPU kernel can write efficiently only when output sizes and write offsets are
known in advance (so that writes are disjoint and coalesced) \cite{CUDAProgGuide}.

\begin{defi}[Batch specification]
Fix a term order $\prec$ and a current basis $G$ (with optional signature metadata).
A \emph{batch specification} is a tuple
\[
\mathcal{B}=(\mathcal{U},\ \mathcal{C},\ \mathsf{Adm}),
\]
where $\mathcal{U}$ is a finite set of targets (e.g.\ lcms from selected critical pairs, or
signature targets), $\mathcal{C}$ is a finite set of candidate reducers (basis elements or labeled
polynomials), and $\mathsf{Adm}$ is an admissibility predicate encoding constraints (criteria,
signature legality, rewrite rules).
\end{defi}

\begin{defi}[Symbolic preprocessing operator]
\label{def:SP-operator}
Symbolic preprocessing is the map
\[
\mathsf{SP}:\mathcal{B}\longmapsto(\mathcal{T},\ \mathcal{R},\ \Pi),
\]
where:
\begin{itemize}[leftmargin=2em]
\item $\mathcal{T}=\{m_1\succ\cdots\succ m_N\}$ is a \emph{monomial dictionary} (sorted keys) for the
batch;
\item $\mathcal{R}=\{(t_i,g_{k_i})\}_{i=1}^r$ is a finite list of admissible shifted reducers
(with $\mathsf{Adm}(t_i,g_{k_i})=\mathrm{true}$);
\item $\Pi$ is a \emph{layout plan} sufficient to write the sparse matrix
$A=A(\mathcal{T},\mathcal{R})\in\Fp^{r\times N}$ into flat buffers, typically
\[
\Pi \equiv (\texttt{row\_ptr},\ \texttt{col\_ind},\ \texttt{val},\ \texttt{dict\_keys},\ \texttt{row\_meta}),
\]
with \texttt{row\_ptr} the prefix sums of row lengths (CSR offsets), \texttt{col\_ind} the column
indices in $\{1,\dots,N\}$, \texttt{val} the coefficients in $\Fp$, \texttt{dict\_keys} an encoding
of $\mathcal{T}$, and \texttt{row\_meta} the metadata needed to reconstruct the algebraic meaning
of each row (e.g.\ $(t_i,k_i)$ and/or signatures).
\end{itemize}
\end{defi}

\noindent
Definition~\ref{def:SP-operator} makes precise the “compiler” viewpoint already implicit in
Section~\ref{sec:parallelism}: $\mathsf{SP}$ does not merely \emph{choose} data; it produces a plan
$\Pi$ that enables \emph{deterministic, offset-addressable} materialization of $A$.

\subsubsection*{Setification, ordering, and the dictionary}
Mathematically, the dictionary $\mathcal{T}$ is the \emph{setification} of a multiset of monomial
occurrences generated by shifted reducers, equipped with the total order induced by $\prec$.
Let
\[
\mathcal{M}(\mathcal{R}) \ :=\ \biguplus_{(t,g)\in\mathcal{R}} \supp(tg)
\]
denote the multiset of monomials produced by expanding the shifted reducers, and let
\(
\mathrm{Set}(\mathcal{M}(\mathcal{R}))=\bigcup_{(t,g)\in\mathcal{R}}\supp(tg)
\)
be its underlying set.
Then the intended dictionary is
\[
\mathcal{T} \;=\; \mathrm{Sort}_{\prec}\Big(\mathrm{Set}(\mathcal{M}(\mathcal{R}))\Big).
\]
Operationally, this “setification + sort” is realized by bulk primitives:
sort the multiset of keys and then apply \texttt{unique}. Efficient GPU realizations of radix-sort
and merge-sort primitives (the exact mechanisms behind high-throughput \texttt{sort/unique}) are
classical and well-studied \cite{Satish09}.

\subsubsection*{Dictionary indexing as a join}
For each row $i$ corresponding to the shifted polynomial $t_i g_{k_i}$, write its sorted support as
\[
\supp(t_i g_{k_i})=\{u_{i,1},\dots,u_{i,\ell_i}\},\qquad u_{i,1}\succ\cdots\succ u_{i,\ell_i}.
\]
Row assembly requires the map
\[
u_{i,j}\ \longmapsto\ \mathrm{index}_{\mathcal{T}}(u_{i,j})\in\{1,\dots,N\},
\]
i.e.\ a join between the per-row key lists and the global dictionary.
If the row supports and dictionary are both sorted by $\prec$, this join can be implemented by
merge-like traversals or segmented searches that avoid random probing; these are precisely the
forms compatible with \SIMT execution.

\subsection{Why symbolic preprocessing is memory/latency bound on \GPU}

The principal cost of $\mathsf{SP}$ is not finite-field arithmetic, but global memory traffic
and synchronization required by \texttt{sort/unique} and by dictionary indexing.

\subsubsection*{A structural decomposition of the work}
Let $\ell_i=|\supp(t_i g_{k_i})|$ and define
\[
M:=\sum_{i=1}^r \ell_i
\qquad\text{and}\qquad
N:=|\mathcal{T}|.
\]
Then any correct implementation of $\mathsf{SP}$ must perform (at least) the following tasks:
\begin{enumerate}[leftmargin=2em]
\item \textbf{Generate keys:} produce the $M$ monomial occurrences in $\mathcal{M}(\mathcal{R})$.
\item \textbf{Deduplicate and order:} compute $\mathcal{T}$ as the sorted set of these keys
      (a \texttt{sort} followed by \texttt{unique} in typical realizations) \cite{Satish09}.
\item \textbf{Plan storage:} compute row lengths and offsets \texttt{row\_ptr} by a prefix-sum
      (count $\rightarrow$ scan $\rightarrow$ fill).
\item \textbf{Index and fill:} for each occurrence $u_{i,j}$, compute its dictionary index and
      write (\texttt{col\_ind},\texttt{val}) into the preallocated segment determined by
      \texttt{row\_ptr}.
\end{enumerate}

\subsubsection*{Lower bounds from information movement}
Independently of implementation details, the input to the dictionary stage contains $M$ keys.
Hence any algorithm must read $\Omega(M)$ key data from memory, and since the output must contain
at least the $N$ unique keys plus $M$ column indices, it must also write $\Omega(N+M)$ data.
This already enforces a bandwidth/latency lower bound: the runtime of $\mathsf{SP}$ is bounded below
by the time needed to move $\Theta(M+N)$ words through global memory.

Moreover, the deduplication problem contains sorting as a core subproblem: to produce a deterministic
dictionary ordered by $\prec$, the computation must (explicitly or implicitly) permute keys into
$\prec$-order and identify equal runs. High-throughput GPU sorting algorithms achieve this by
multiple streaming passes over the keys (radix or merge), which further increases global traffic
by repeated reads/writes of the key arrays \cite{Satish09}.
Thus even when arithmetic in $\Fp$ is cheap, the symbolic stage remains dominated by key traffic.

\subsubsection*{Why \GPU latency dominates in practice}
On throughput-oriented GPUs, irregular access and divergence reduce the effective utilization of
memory bandwidth and memory-level parallelism. Analytical performance models that account for
thread-level and memory-level parallelism emphasize that insufficiently regular memory behavior
directly manifests as stalled warps and latency-bound execution \cite{HongKim09}.
This is corroborated in closely related sparse workloads (e.g.\ SpMV): when access patterns are
irregular, performance is governed far more by memory behavior than by arithmetic throughput
\cite{BellGarland09}. Symbolic preprocessing exhibits the same pathologies—random-like probes under
hashing, branch-heavy admissibility checks, and global synchronization for deduplication—unless it
is reorganized into bulk streaming primitives.

\begin{rem}[Non-random sparsity and the “right” reformulation]
Although $A$ is sparse, it is not arbitrary: its nonzeros arise by multiplying \emph{known} sparse
supports by monomials and projecting into a \emph{shared} dictionary. This means the irregularity is
\emph{algorithmic} (dynamic growth and lookup), not inherent.
The correct reformulation is therefore to express $\mathsf{SP}$ using bulk primitives
(scan/sort/unique/merge) and two-pass planning so that the only global synchronization is that
already implicit in these primitives. This is the organizing principle behind the GPU designs
developed in the next sections.
\end{rem}


\section{The Symbolic--Numeric Divide: Why Classical \GPU{} GEMM Fails}
\label{sec:symb-numeric-divide}

The discussion so far separates two objects that are often conflated in the Gröbner literature:
the \emph{numeric} act of performing elimination over $\Fp$ on a fixed matrix $A$
(Section~\ref{sec:kernel-view}), and the \emph{symbolic} act of constructing the instance
$(\mathcal{T},\mathcal{R},\Pi)=\mathsf{SP}(\mathcal{B})$ (Section~\ref{sec:symbolic-bottleneck}).
This section makes the separation sharp: \GPU{} kernels for dense linear algebra are excellent
once an array-of-numbers problem is already present, but symbolic preprocessing is precisely the
phase where the ``array-of-numbers'' abstraction has not yet been earned.

\subsection{The GEMM performance contract}
At a mathematical level, $\textsc{gemm}$ is the bilinear map $(B,C)\mapsto BC$; at a systems level,
high-performance $\textsc{gemm}$ implements a \emph{performance contract}:
(i) memory is accessed in regular, contiguous patterns,
(ii) data is reused predictably via tiling into registers/shared memory,
and (iii) control flow is uniform at warp granularity.
The reason is structural: dense matrix multiplication has extremely high arithmetic intensity,
and by blocking, one arranges that each fetched element participates in many fused multiply-adds
before eviction. This is exactly the viewpoint behind the classical \GPU{} microkernel literature,
where peak performance is approached by disciplined register blocking and staged movement
through the memory hierarchy \cite{VolkovDemmel08,CUDAProgGuide}.

When these hypotheses hold, the \GPU{} is close to an ``ideal'' throughput machine:
latency is hidden by massive concurrency, and the dominant cost becomes the rate at which
the hardware can retire arithmetic instructions \cite{CUDAProgGuide}.
This is the regime where dense elimination can be organized so that most time is spent
in panel updates and trailing-matrix updates (i.e., GEMM-shaped work), recovering
a near-peak fraction of compute \cite{VolkovDemmel08}.

\subsection{Why Gröbner symbolic preprocessing violates the contract}
Symbolic preprocessing is not a slightly messy instance of dense linear algebra; it is a
different kind of computation.
In the operator language of Section~\ref{sec:symbolic-bottleneck}, the symbolic phase must
produce a \emph{layout certificate} $\Pi$:
\[
(\mathcal{T},\mathcal{R},\Pi)=\mathsf{SP}(\mathcal{B}),\qquad
\Pi=(\texttt{row\_ptr},\texttt{col\_ind},\texttt{val},\texttt{dict\_keys},\texttt{row\_meta}),
\]
and only then does the numeric phase see a matrix $A$ at all.

The failure of $\textsc{gemm}$ begins before arithmetic: in $\mathsf{SP}(\mathcal{B})$,
the principal operation is the \emph{indexing homomorphism}
\[
u \in \supp(t_i g_{k_i})\ \longmapsto\ \mathrm{index}_{\mathcal{T}}(u)\in\{1,\dots,N\},
\]
which is a global dictionary join/merge across heterogeneous supports.
This is (a) memory-latency dominated (irregular reads/writes; weak locality),
(b) synchronization pressured (deduplication/closure), and
(c) control-flow divergent (admissibility/criteria, varying row lengths),
exactly the opposite of the GEMM contract (Section~\ref{sec:parallelism})
\cite{CUDAProgGuide}.
In other words: \emph{the dominant cost is manufacturing the index space},
not multiplying within an already-fixed index space.

A second, deeper obstruction is that Gröbner batches are \emph{adaptive}.
The set $\mathcal{T}$ is not merely large; it is \emph{discovered} under closure rules
(Section~\ref{sec:kernel-view}, Remark on one-step reduction closure).
Thus the sparsity pattern is not a static property that can be profiled once and exploited
many times; it is an evolving consequence of term order, reducer availability,
and (in $F_5$) signature admissibility. The symbolic phase therefore cannot be amortized away
unless one changes the algorithmic interface.

\subsection{Why densification and ``block GEMM'' are usually false economies}
One might attempt to recover the GEMM contract by densifying: pack columns into tiles,
pad rows, and reduce a block-dense surrogate. This strategy pays twice.

First, padding is not free. Gröbner matrices are typically \emph{highly unstructured} at the tile
scale: supports are induced by monomial multiplication and then projected through $\mathcal{T}$,
so within any putative block one often carries many structural zeros. The arithmetic saved by
tensor-like dense kernels is then repaid as wasted bandwidth and wasted flops on padding.

Second, and more importantly for our setting, densification does not remove the symbolic phase;
it merely \emph{relocates} it.
To build the blocks one still must (i) choose the dictionary $\mathcal{T}$, (ii) assign each
monomial to a block/offset, and (iii) materialize the padded layout. This is exactly
$\mathsf{SP}(\mathcal{B})$ with an additional quantization map onto block coordinates.

The modern sparse-tensor-core literature makes this point explicit in a neighboring domain.
Even when the numeric kernel is aggressively accelerated (e.g.\ TC-based SpMM),
substantial work is spent on format conversion, reordering, and index precomputation; moreover,
such preprocessing is most effective when the same sparse matrix is reused many times
\cite{FanWangChu24}.
Gröbner basis computation is the opposite workload: the matrix instance varies from batch to batch,
so symbolic compilation overhead cannot be assumed amortizable \emph{a priori}.
Hence ``just use tensor cores'' is not a plan unless one first redesigns the symbolic interface.

\subsection{The correct abstraction: compile symbolic structure, then compute numerically}
The right systems abstraction is therefore the one already implicit in
Sections~\ref{sec:kernel-view}--\ref{sec:symbolic-bottleneck}:

\begin{quote}
\emph{Symbolic preprocessing is a compilation step from algebraic structure to a sparse
linear-algebra instance.}
\end{quote}

Once $\Pi$ is produced, the numeric phase may indeed use dense kernels \emph{locally}:
within elimination panels, within block-sparse substructures, or within carefully chosen
compressed formats. But the fundamental ordering of responsibilities cannot be reversed.
The \GPU{} succeeds only after symbolic structure has been turned into flat arrays with
deterministic offsets (two-pass count $\rightarrow$ prefix-sum $\rightarrow$ fill),
so that the numeric phase sees a stable memory layout and can exploit the standard
memory hierarchy \cite{CUDAProgGuide}.

This motivates the architecture in the next section: rather than forcing Gröbner computation
into a dense-linear-algebra mold, we treat $\mathsf{SP}$ as the first-class object and design it
around bulk primitives (sort/unique/merge/scan). Only then do we apply the best available
numeric kernels to the resulting $A$.

\section{The Proposed \GPU Architecture: Dynamic-to-Static Compilation of Symbolic Data}

This section instantiates the guiding philosophy developed in Sections~5--6:
\emph{symbolic preprocessing should be treated as a compilation problem.}
The output of the compiler is not a polynomial, but a \emph{static materialization plan}
for the batch matrix: a deterministic dictionary of monomials and a write-once sparse layout
that turns the subsequent computation into ``ordinary'' finite-field sparse linear algebra.

\subsection{Design principle: compilation into a write-once plan}
A \GPU kernel is fast precisely when (i) the set of writes is known ahead of time,
(ii) writes land in contiguous buffers, and (iii) synchronization is limited to bulk collectives.
Hence symbolic preprocessing must be reorganized into the canonical two-pass scheme:
\[
\textbf{Count}\ \longrightarrow\ \textbf{Prefix-sum plan}\ \longrightarrow\ \textbf{Fill}.
\]
Concretely, for a batch we want to produce the tuple
\[
(\mathcal{T},\ \mathcal{R},\ \Pi),
\]
where $\mathcal{T}$ is a sorted monomial dictionary, $\mathcal{R}$ is the list of shifted reducers
(rows), and $\Pi$ is a sparse layout plan (row pointers / offsets, column indices, coefficients, and row metadata).
The \emph{only} global synchronization points are those implementing scan/sort/unique,
which are exactly the data-parallel primitives that GPUs support efficiently
when invoked on large, contiguous arrays \cite{SatishHarrisGarland2009}.

\subsection{FBSP: frontier-based symbolic preprocessing as bulk set algebra}
We propose \emph{frontier-based symbolic preprocessing} (FBSP), which replaces
dynamic ``insert into a global dictionary'' logic by repeated bulk operations on flat arrays.

\paragraph{Frontier invariant.}
At any stage, FBSP maintains a multiset of monomial \emph{candidates}
\[
\mathcal{F}\subseteq \Mon
\qquad\text{(stored as a flat array of keys, duplicates allowed),}
\]
intended to converge to the batch dictionary $\mathcal{T}$.
The invariant is that $\mathcal{F}$ is generated by structured operations:
(1) shifting known sparse supports by monomials, and (2) enforcing a chosen closure rule
(e.g.\ one-step reduction closure, or degree truncation).
Crucially, generation is embarrassingly parallel; \emph{uniqueness} is enforced only by bulk
\texttt{sort}+\texttt{unique}.

\paragraph{Compiler loop (conceptual).}
Let $M=\sum_i |\supp(t_i g_{k_i})|$ be the total number of monomial occurrences across shifted rows.
FBSP proceeds as follows.
\begin{enumerate}[leftmargin=2em]
\item \textbf{Generate keys (flat, duplicate-tolerant).} In parallel over rows and terms,
      write all candidate monomial keys into a single flat buffer.
\item \textbf{Canonicalize.} Sort keys and unique them to obtain a deterministic dictionary
      $\mathcal{T}=\{m_1\succ\cdots\succ m_N\}$ (and optionally a compact ``dictionary id'' array).
      This is the \GPU-friendly substitute for hashing \cite{SatishHarrisGarland2009}.
\item \textbf{(Optional) enforce closure.} If the symbolic regime requires closure under a rule
      (e.g.\ include monomials needed by one-step reductions), generate additional candidates from
      $\mathcal{T}$ into a new buffer and repeat canonicalization. Because each round is a pure
      bulk transform, the loop remains \SIMT-friendly.
\end{enumerate}
Algorithm~\ref{alg:fbsp} later gives a concrete two-pass instantiation of this idea.

\subsection{Monomial keys: order-refining encodings that make sorting feasible}
Dictionary construction is only as good as the key representation.
A key must admit:
(i) fast comparison (for sorting/merging),
(ii) deterministic refinement of the term order $\prec$ (so that $\mathcal{T}$ is canonical),
and (iii) compact storage.

\paragraph{Graded orders.}
For common graded orders (deglex, grevlex), one uses a composite key
\[
\mathrm{key}(x^\alpha)=\big(\deg(\alpha),\ \mathrm{tie}(\alpha)\big),
\]
where $\mathrm{tie}(\alpha)$ is an exponent-vector ordering consistent with the tie-break rule
(lex or reverse-lex). When degrees are bounded (the typical degree-driven regime),
each $\alpha_i$ fits into a fixed-width lane and the full exponent vector can be packed into
one or two machine words.

\paragraph{Multiword keys and correctness.}
If $n$ or degrees are too large for single-word packing, use a multiword representation
(e.g.\ two 64-bit words, or a short fixed array of 32-bit lanes) and sort lexicographically.
Correctness is ensured by the requirement that the key order \emph{refines} $\prec$:
ties are broken by the full exponent vector, so no two distinct monomials collide.
This gives deterministic dictionaries without hash contention.

\subsection{Coalesced sparse polynomial storage: \SOA support vectors}
To make ``shift supports by a monomial'' a streaming operation, store sparse polynomials in \SOA form:
\[
\texttt{mon\_key}[\cdot],\quad \texttt{coeff}[\cdot],\quad
\texttt{offset}[i],\quad \texttt{len}[i].
\]
Then a warp can stream consecutive keys/coefficients, while \texttt{offset}/\texttt{len}
define segments for parallel primitives (scan, segmented reductions, segmented merges).
This is the data-layout analogue of the algebraic observation in Section~3:
we want to map structured supports into structured rows with minimal control flow.

\subsection{Row assembly as a dictionary join; divergence control via SELL-$C$-$\sigma$ bucketing}
After $\mathcal{T}$ is fixed, each shifted row must be converted into column indices.
Abstractly, for each row we must compute the join
\[
\supp(t_i g_{k_i})\ \Join\ \mathcal{T}
\quad\Longrightarrow\quad
\{(\mathrm{index}_{\mathcal{T}}(u),\ [u](t_i g_{k_i})):\ u\in\supp(t_i g_{k_i})\}.
\]
Because both the row support and $\mathcal{T}$ are sorted, the join should be implemented
as a \emph{merge}-style traversal rather than per-term hashing.

\paragraph{Merge-path partitioning.}
A robust strategy is to partition merges so that each warp (or cooperative thread block)
receives a contiguous slice of the merge grid, avoiding load imbalance and reducing
synchronization. This is precisely the design goal of merge-path methods \cite{GreenMcCollBader2012}.

\paragraph{SELL-style length bucketing.}
Row lengths are heterogeneous, and that heterogeneity is the primary source of warp divergence.
Hence we bucket rows by length and process them in fixed-size ``slices,''
padding within each slice when necessary.
This is the same principle underlying SELL-$C$-$\sigma$: sort locally by row length (within windows)
to reduce padding overhead while improving SIMD/SIMT efficiency \cite{KreutzerHagerWellein2014}.
In our setting, SELL-style slicing is not only a storage format choice for later SpMV-like kernels;
it is a \emph{compiler decision} that regularizes symbolic row assembly itself.

\paragraph{Outcome.}
FBSP + sorted dictionary keys + merge-based joins yield a deterministic, write-once sparse plan $\Pi$:
row offsets from prefix sums, column indices from merge joins, and coefficients streamed from \SOA data.
At that point, the pipeline crosses the symbolic--numeric boundary (Section~6):
everything downstream is ``numeric'' finite-field sparse linear algebra over a static layout.

\section{Finite-Field Arithmetic on \GPU: Register-Resident Modular Microkernels}
\label{sec:ff-arith-gpu}

Once symbolic preprocessing has produced a static layout plan $\Pi$ (Section~5) and the matrix/row
buffers can be streamed in a regular way (Section~7), the performance of the entire numeric phase
is governed by a single primitive: \emph{fused modular multiply--add} in $\Fp$.
This section isolates that primitive and presents two reduction families---Barrett and Montgomery---as
\GPU-friendly instantiations.

\subsection{The numeric primitive: modular FMA as the inner loop}
\label{subsec:mod-fma-primitive}

Both sparse elimination (row operations, pivot updates) and Krylov kernels (SpMV/SpMM and dot products)
reduce to repeated updates of the form
\begin{equation}
\label{eq:mod-fma}
a \;\leftarrow\; a \;+\; b\cdot c \pmod p,
\end{equation}
with $a,b,c\in\Fp$ and $p$ an odd prime (fixed for the entire computation, or at least for an entire run).
At scale, every additional branch, every integer division, and every spill of intermediate values from
registers to memory is amplified by the enormous call count of \eqref{eq:mod-fma}.

\paragraph{Design objective.}
We therefore seek a \emph{register algebra} for $\Fp$ in which:
(i) reduction avoids division,
(ii) the correction step is branch-minimized (ideally predicated),
(iii) intermediate products remain in registers using wide multiply instructions,
and (iv) reductions are scheduled sparsely (``lazy'' when safe) to maximize arithmetic intensity.

\subsection{Word-size regimes and overflow discipline}
\label{subsec:word-regimes}

Let $w\in\{32,64\}$ be the machine word size used for residues.
In practice, Gröbner-basis linear algebra often chooses primes $p$ so that products fit comfortably
in a double-width accumulator:
\[
p < 2^{w-1}, \qquad
b,c \in [0,p), \qquad
b\cdot c < 2^{2w}.
\]
Then one can form a double-width integer $x=b\cdot c + a$ exactly, followed by a fast reduction
$x \mapsto x \bmod p$.

\begin{rem}[Lazy reduction window]
A key micro-optimization is to permit \emph{redundant representatives}:
store $a$ not only in $[0,p)$ but in a wider interval $[0, kp)$ for a small $k$ (typically $k=2,4,8$),
so that several updates \eqref{eq:mod-fma} can be accumulated before a correction is forced.
This reduces correction frequency and branch pressure, at the cost of a slightly wider accumulator.
The correctness invariant is simply $a \equiv a_{\mathrm{true}} \pmod p$.
\end{rem}

\subsection{Barrett reduction as reciprocal multiplication}
\label{subsec:barrett}

Barrett reduction replaces division by a multiplication with a precomputed reciprocal
\cite{Barrett87}. Fix a shift parameter $k$ (typically $k=2w$ for a $w$-bit residue type) and set
\[
\mu = \left\lfloor \frac{2^k}{p} \right\rfloor.
\]
Given an input integer $x$ (e.g.\ a double-width product-accumulate), define an approximate quotient
\[
q \approx \left\lfloor \frac{x\,\mu}{2^k} \right\rfloor,
\qquad
r = x - q p.
\]
With suitable bounds on $x$ (the regime relevant to \eqref{eq:mod-fma}), one proves that $r$ lies in a
small neighborhood of $[0,p)$, so that a \emph{constant number of corrections} suffices:
\[
\text{while } r\ge p \text{ do } r\leftarrow r-p,
\qquad
\text{while } r<0 \text{ do } r\leftarrow r+p.
\]
On \GPU hardware, the goal is to realize the quotient approximation using high-multiply and shifts,
and to implement the correction step with predication rather than divergent branches.

\begin{rem}[GPU-tuned Barrett variants]
Modern GPU implementations often modify the classic Barrett scheme to minimize the \emph{number of
correctional subtractions} and to keep the entire reduction in the integer pipeline; such variants
are especially effective for 62-bit/64-bit moduli used in NTT-style kernels \cite{ShivdikarSEED22}.
The same principle applies here: fewer corrections directly translates to fewer divergent paths and
higher warp efficiency in elimination/Krylov kernels.
\end{rem}

\subsection{Montgomery multiplication and domain reuse}
\label{subsec:montgomery}

Montgomery multiplication trades a one-time change of representation for extremely regular inner-loop
behavior \cite{Montgomery85}. Choose $R=2^w$ with $\gcd(R,p)=1$ and define the Montgomery map
\[
a \longmapsto \widetilde{a}=aR \bmod p.
\]
Let $p' \equiv -p^{-1} \pmod R$ (a compile-time or precomputed constant once $p$ is fixed).
Given a product $t=\widetilde{b}\,\widetilde{c}$ in double-width precision, define
\[
m = (t\cdot p') \bmod R,
\qquad
u = \frac{t + m p}{R}.
\]
Then $u \equiv \widetilde{b}\,\widetilde{c}\,R^{-1} \pmod p$, and one final correction
$u\leftarrow u-p$ if $u\ge p$ yields the reduced Montgomery product. The crucial point for \GPU kernels
is that:
(i) division by $R$ is a shift,
(ii) the reduction uses only multiply-add and low-word masking,
(iii) exactly one conditional subtraction is required.

\paragraph{Why Montgomery matches Gröbner linear algebra.}
Elimination and Krylov iterations apply \eqref{eq:mod-fma} billions of times with the \emph{same} modulus
$p$. If all matrix entries and scalars are stored in Montgomery form, then every inner-loop multiply
is already reduced by the same regular pipeline, and conversion back to standard residues is needed only
at the very end (or when emitting output coefficients).

\subsection{Choosing between Barrett and Montgomery on \GPU}
\label{subsec:choose-reduction}

Both reduction families are correct and high-performance when engineered carefully, but they favor
different arithmetic profiles:

\begin{itemize}[leftmargin=2em]
\item \textbf{Barrett} is natural when values are produced in standard representation and reduced
immediately, and when the implementation can guarantee a small, fixed number of correction steps.
It is also attractive when occasional reductions dominate and representation changes are undesirable
\cite{Barrett87,ShivdikarSEED22}.
\item \textbf{Montgomery} is natural when the computation performs long sequences of modular multiplies
under a fixed modulus, so that the one-time domain conversion is amortized and the inner loop becomes a
uniform sequence of integer operations with a single predicated correction \cite{Montgomery85}.
\end{itemize}

\noindent
In the architecture of Sections~6--7, this choice is an \emph{interface contract} between the symbolic
compiler (which fixes buffer formats and data movement) and the numeric kernels (which consume those
buffers). Once fixed, the reduction strategy should be used consistently across elimination (Section~9)
and Krylov solvers to avoid repeated conversions and to preserve register locality.

\section{From Elimination to Kernel Solving: PSGE vs.\ Block Wiedemann/Lanczos}
\label{sec:psge-vs-krylov}

Section~\ref{sec:kernel-view} identifies batched Gröbner reduction with structured sparse linear algebra
over $\Fp$, while Sections~\ref{sec:symbolic-bottleneck}--\ref{sec:ff-arith-gpu} explain how to make the
\GPU{} pipeline viable: (i) compile symbolic structure into a static layout plan $\Pi$ and (ii) run
register-resident $\Fp$ arithmetic in the numeric kernels.
What remains is the algorithmic choice \emph{inside} the numeric phase:
should we compute a row-reduced form (elimination), or should we compute kernel vectors (relations)
via black-box Krylov methods?

\subsection{Why full elimination is not always optimal on a \GPU}
\label{subsec:elim-not-always}

Let $A\in\Fp^{r\times N}$ be the sparse Macaulay operator for a batch (Definition~\ref{def:macaulay-batch})
materialized by $\Pi=\mathsf{SP}(\mathcal{B})$ (Definition~\ref{def:SP-operator}).
Classical Gaussian elimination produces an echelon form by repeated pivoting and row updates.
Over sparse matrices this can induce \emph{fill-in}: zeros become nonzeros, and the matrix drifts
toward a denser and less structured representation.
On throughput hardware this is doubly harmful:
\begin{itemize}[leftmargin=2em]
\item \textbf{Loss of sparsity} inflates memory traffic and defeats the locality assumptions
      needed for stable throughput;
\item \textbf{Irregularity increases} as row lengths broaden and vary, causing divergence and
      further reducing effective bandwidth.
\end{itemize}
Hence the numeric phase should be viewed as a \emph{design space} with two extremes:
\begin{enumerate}[leftmargin=2em]
\item \textbf{Elimination (echelon-form production).} Necessary when the batch must emit reduced rows
      that become new basis elements (the $F_4$/$F_5$ operational meaning).
\item \textbf{Kernel solving (relation discovery).} Preferable when the batch goal is to discover
      dependencies/syzygies (Section~\ref{sec:kernel-view}), i.e.\ vectors $v\neq 0$ with $v^\top A=0$,
      without constructing a full echelon form.
\end{enumerate}
The \GPU{}-relevant point is that kernel methods reduce the work to repeated applications of $A$
(and often $A^\top$) to vectors/blocks of vectors—i.e.\ SpMV/SpMM kernels in a fixed sparse format.

\subsection{PSGE: panel-structured Gaussian elimination as a hybrid numeric kernel}
\label{subsec:psge}

We use \emph{PSGE} (parallel structured Gaussian elimination) to mean a hybrid strategy that forces
the elimination schedule to respect the static data compiled in Sections~\ref{sec:symbolic-bottleneck}
and \ref{sec:symb-numeric-divide}.

\paragraph{Panel principle.}
Partition the column set into panels
\[
\{1,\dots,N\} = P_1 \cup P_2 \cup \cdots \cup P_s,
\]
where each $P_\ell$ is a contiguous segment in the dictionary order (equivalently, in the monomial order
used to index $\mathcal{T}$).
At step $\ell$, restrict attention to rows whose leading nonzero lies in $P_\ell$, and perform:
\begin{enumerate}[leftmargin=2em]
\item a \emph{dense-enough} elimination on the panel (so that the inner update kernels resemble GEMM-like
      block updates locally), and
\item a \emph{sparse} trailing update on the remaining columns, preserving the global sparse format.
\end{enumerate}

\paragraph{Why $\Pi$ must be enriched.}
For PSGE to be deterministic and \GPU-friendly, symbolic preprocessing must produce, in addition to
\texttt{row\_ptr/col\_ind/val}, a \emph{block schedule}:
panel boundaries and the row-to-panel incidence needed to launch kernels with disjoint write regions.
Conceptually this is still part of $\Pi$ in Definition~\ref{def:SP-operator}: $\Pi$ is not only a storage
plan, but an execution plan. In particular, the row-bucketing ideas of Section~7 (SELL-style slicing)
naturally induce panel-local work queues with reduced divergence.

\paragraph{When PSGE wins.}
PSGE is most effective when the batch matrix has a pronounced “front” of pivotable columns so that:
(i) panel updates can be made dense enough to amortize memory cost, yet
(ii) the trailing part retains exploitable sparsity (so fill-in is contained).
In this regime PSGE preserves the Gröbner meaning of row reduction—new leading monomials and reduced rows—
while keeping the numeric kernels close to the symbolic--numeric contract of Section~6.

\subsection{Block Wiedemann and Block Lanczos: kernel vectors via black-box linear algebra}
\label{subsec:block-krylov}

When the objective is to obtain relations (kernel vectors) rather than an explicit echelon form,
Krylov methods can dominate because they avoid fill-in and reduce the cost to repeated sparse operator
applications.

\paragraph{Wiedemann viewpoint.}
Wiedemann’s algorithm treats $A$ as a black-box linear operator over a finite field and reduces the
solution of sparse linear systems (and related tasks) to computing a minimal polynomial from a scalar
Krylov sequence built from repeated applications of $A$ to vectors \cite{Wiedemann86}.
This is tailor-made for the \GPU{} once the sparse operator is materialized by $\Pi$:
the hot loop is “apply $A$” (and sometimes “apply $A^\top$”), which is exactly the workload stabilized by
Sections~\ref{sec:symbolic-bottleneck} and \ref{sec:parallelism}.

\paragraph{Block Wiedemann = parallelism without changing the algebra.}
Coppersmith’s block modification replaces a single Krylov sequence by a matrix sequence obtained from
multiple starting vectors, enabling block matrix--vector products and reducing synchronization overhead;
it is explicitly motivated as a practical alternative to structured elimination for huge sparse instances
\cite{Coppersmith94}.
From the \GPU{} perspective, block Wiedemann is the “native” Krylov method:
it upgrades SpMV to SpMM (apply $A$ to a small block of vectors), increasing arithmetic intensity and
improving utilization while keeping memory access regular (given a good sparse format).

\paragraph{Block Lanczos.}
Block Lanczos is another dependency-finding paradigm, historically developed for sparse nullspace
problems (notably over $\mathrm{GF}(2)$ in cryptographic linear algebra) by applying the matrix and its
transpose to blocks of vectors to produce many dependencies with word-parallel structure \cite{Montgomery95}.
In our setting over an odd prime field $\Fp$, the same architectural lesson persists:
block recurrences turn “many independent vector iterations” into a small number of structured block kernels,
which is precisely the style favored by \SIMT hardware.

\begin{rem}[Syzygies, revisited]
By Proposition~3.1 (Section~\ref{sec:kernel-view}), kernel vectors of $A$ correspond to truncated syzygies
among the shifted reducers.
Thus block Krylov solvers are not merely a numerical afterthought: they compute exactly the algebraic
relations that govern cancellation and redundancy in the Gröbner pipeline.
This makes them natural components inside the batched loop: e.g.\ as a relation-discovery stage that
prunes redundant rows or extracts dependencies before (or instead of) a full echelon computation.
\end{rem}

\section{A Concrete \GPU Symbolic Preprocessing Algorithm (Two-Pass FBSP)}
\label{sec:fbsp-algorithm}

This section instantiates the symbolic operator $\mathsf{SP}$ from
Section~\ref{sec:symbolic-bottleneck} as an explicit \GPU compilation procedure.
The objective is to output $(\mathcal{T},\mathcal{R},\Pi)$ such that the Macaulay operator
$A(\mathcal{T},\mathcal{R})$ (Section~\ref{sec:kernel-view}) is materialized in flat buffers with
deterministic offsets, enabling the numeric phase to run under the symbolic--numeric contract of
Section~\ref{sec:symb-numeric-divide}.

\paragraph{Two-pass meaning.}
``Two-pass'' refers to the only scalable way to write sparse data on a \GPU:
(i) \emph{count} per-row output sizes, (ii) compute global offsets via a prefix-sum (scan),
and (iii) \emph{fill} disjoint contiguous segments. This eliminates per-monomial dynamic allocation
and avoids global append/insert contention \cite{Blelloch90,CUDAProgGuide}.

\begin{algo}[FBSP: Frontier-Based Symbolic Preprocessing as dynamic-to-static compilation]
\label{alg:fbsp}
\begin{algorithmic}[1]
\Require Term order $\prec$; current basis $G$ (optional signatures);
        batch specification $\mathcal{B}=(\mathcal{U},\mathcal{C},\mathsf{Adm})$ (Section~\ref{sec:symbolic-bottleneck})
\Ensure Dictionary $\mathcal{T}$; row list $\mathcal{R}$; sparse plan $\Pi=(\texttt{row\_ptr},\texttt{col\_ind},\texttt{val},\texttt{dict\_keys},\texttt{row\_meta})$

\State \textbf{Row selection (algebraic control).}
      Construct the admissible row metadata list
      $\mathcal{R}=\{(t_i,g_{k_i})\}_{i=1}^r$ by filtering $\mathcal{C}$ against $\mathsf{Adm}$ so that
      $\mathcal{R}$ covers the targets $\mathcal{U}$.

\vspace{0.25em}
\State \textbf{Pass 1: count (sizes only).}
\ForAll{$i=1,\dots,r$ in parallel}
  \State compute $\ell_i := |\supp(t_i g_{k_i})|$ (typically $\ell_i = |\supp(g_{k_i})|$);
        write \texttt{len[i] $\leftarrow \ell_i$}
\EndFor
\State compute \texttt{row\_ptr} as the exclusive prefix-sum of \texttt{len}
      (so $M:=\texttt{row\_ptr[r]}=\sum_i \ell_i$) \cite{Blelloch90}

\vspace{0.25em}
\State allocate flat buffers of length $M$:
      \texttt{row\_key[0..M-1]}, \texttt{row\_val[0..M-1]}  \Comment{materialized shifted rows}

\vspace{0.25em}
\State \textbf{Pass 2: fill (materialize shifted rows).}
\ForAll{$i=1,\dots,r$ in parallel}
  \State let $s \gets \texttt{row\_ptr[i]}$
  \For{$j=0$ to $\ell_i-1$}
     \State write \texttt{row\_key[s+j]} $\leftarrow$ \textsf{MonKey}$(t_i\cdot m_{k_i,j})$
           \Comment{$m_{k_i,j}$ = $j$-th monomial of $g_{k_i}$ in $\prec$-order}
     \State write \texttt{row\_val[s+j]} $\leftarrow$ $c_{k_i,j}\in\Fp$
           \Comment{coefficients streamed from \SOA storage (Section~7)}
  \EndFor
\EndFor

\vspace{0.25em}
\State \textbf{Canonicalize dictionary (bulk setification).}
      Create \texttt{cand\_key} as a copy/view of \texttt{row\_key};
      \Statex\hspace{1.85em} radix-sort \texttt{cand\_key} by \textsf{MonKey} and apply \texttt{unique}
      to obtain \texttt{dict\_keys} encoding
      $\mathcal{T}=\{m_1\succ\cdots\succ m_N\}$ \cite{SatishHarrisGarland2009}

\vspace{0.25em}
\State \textbf{Row assembly = dictionary join (indexing).}
      Allocate \texttt{col\_ind[0..M-1]}.
\ForAll{$i=1,\dots,r$ in parallel}
  \State let $s \gets \texttt{row\_ptr[i]}$, $\ell\gets\texttt{len[i]}$
  \State compute \texttt{col\_ind[s..s+$\ell$-1]} by joining the sorted segment
        \texttt{row\_key[s..s+$\ell$-1]} against the sorted dictionary \texttt{dict\_keys}
        using a merge-style traversal (warp/block partitioned via merge-path) \cite{GreenMcCollBader2012}
\EndFor

\vspace{0.25em}
\State \textbf{Output.}
      Set $\Pi=(\texttt{row\_ptr},\texttt{col\_ind},\texttt{row\_val},\texttt{dict\_keys},\texttt{row\_meta})$,
      where \texttt{row\_meta} stores $(t_i,k_i)$ and any signature labels needed for reconstruction.
\end{algorithmic}
\end{algo}

\paragraph{Correctness invariant.}
Because \texttt{dict\_keys} is the sorted set of all keys in \texttt{row\_key}, every term written in
Pass~2 has a unique dictionary position. Therefore the join in the assembly step produces a total map
\(
u \mapsto \mathrm{index}_{\mathcal{T}}(u)
\)
for all row monomials, yielding exactly the coefficient operator of
Definition~\ref{def:macaulay-batch} (Section~\ref{sec:kernel-view}).

\paragraph{What is novel here?}
The novelty is not ``sorting'' itself, but the \emph{compilation contract}:
symbolic preprocessing is realized as a deterministic bulk pipeline
(count $\rightarrow$ scan $\rightarrow$ fill $\rightarrow$ sort/unique $\rightarrow$ join),
eliminating per-monomial dynamic insertion and producing a write-once sparse plan $\Pi$ that
stabilizes the downstream \GPU numeric kernels (Sections~\ref{sec:ff-arith-gpu} and \ref{sec:psge-vs-krylov}).
\section{Correctness and Determinism Guarantees for $\mathsf{SP}$ and FBSP}
\label{sec:fbsp-correctness}

This section upgrades the operator-level definitions in
Sections~\ref{sec:symbolic-bottleneck} and \ref{sec:kernel-view} into formal guarantees for
Algorithm~\ref{alg:fbsp}.  The results isolate what must be true for FBSP to be a
\emph{correct compiler pass} from algebraic batch data to a sparse linear operator over $\Fp$.

\subsection{Preliminaries: keys, supports, and closure}
\label{subsec:fbsp-prelims}

\paragraph{Monomial keys.}
Let $\textsf{MonKey}:\Mon\to\mathcal{K}$ be the packed key map used in Section~7.
We require:

\begin{asm}[Injective, order-refining keys]
\label{ass:key-injective}
\textsf{MonKey} is injective, and its total order on keys refines the term order $\prec$:
for all $u,v\in\Mon$, if $u\prec v$ then $\textsf{MonKey}(u)<\textsf{MonKey}(v)$.
\end{asm}

\paragraph{Truthful truncation (batch closure).}
A finite monomial set $\mathcal{T}\subset \Mon$ is \emph{truthful} for a row set
$\mathcal{R}=\{(t_i,g_{k_i})\}_{i=1}^r$ if it contains all monomials that can appear in the
objects the numeric phase is allowed to form from $\mathcal{R}$.

\begin{defi}[Support-closure for correctness]
\label{def:support-closure}
Let $\mathcal{R}=\{(t_i,g_{k_i})\}_{i=1}^r$.
We say $\mathcal{T}$ is \emph{support-closed for $\mathcal{R}$} if
\[
\mathcal{T}\ \supseteq\ \bigcup_{i=1}^r \supp(t_i g_{k_i}).
\]
If, in addition, the numeric phase will perform a one-step reduction closure
(e.g.\ include monomials required to reduce leading terms by admissible reducers),
we assume $\mathcal{T}$ is closed under that rule as well.
\end{defi}

\subsection{Determinism and race-freedom: FBSP is a compiler pass}
\label{subsec:fbsp-determinism}

The first two results are \emph{systems-level theorems} required for Q1 acceptability in a
GPU-algorithm paper: the compiler stage must be deterministic and must write without races.

\begin{thm}[Race-freedom by prefix-sum allocation]
\label{thm:racefree}
In Algorithm~\ref{alg:fbsp}, assume \texttt{row\_ptr} is computed as an exclusive prefix-sum of
row lengths $(\ell_i)_{i=1}^r$.  Then the fill phase writes to disjoint array segments
\[
[\texttt{row\_ptr}[i],\ \texttt{row\_ptr}[i+1])\quad (i=1,\dots,r),
\]
hence produces no write--write races, independent of thread scheduling.
The same holds for any auxiliary buffers whose offsets are computed by the same prefix-sum plan.
\end{thm}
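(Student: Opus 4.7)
The plan is to reduce the claim to an elementary interval-disjointness argument about the output of an exclusive prefix-sum, and then observe that a write-race can only occur when two distinct threads address a common output cell. All hypotheses are either definitional (the exclusive prefix-sum) or inherited from Algorithm~\ref{alg:fbsp} (each thread $i$ writes only within its assigned segment).

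First I would set notation: write $s_i := \texttt{row\_ptr}[i]$ for $i=1,\dots,r+1$. By the definition of an exclusive prefix-sum of the sequence $(\ell_1,\dots,\ell_r)$ with $\ell_i\ge 0$, we have $s_1 = 0$ and $s_{i+1} = s_i + \ell_i$, so the identity $s_{i+1}-s_i = \ell_i$ holds by construction. This is purely an arithmetic consequence of the scan primitive; no assumption about threading order enters here, since the prefix-sum is a bulk collective whose output is deterministic once the inputs $(\ell_i)$ are fixed.

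Next I would verify interval disjointness. For $i<j$, monotonicity of partial sums gives $s_{i+1}\le s_j$ because $s_j-s_{i+1} = \sum_{k=i+1}^{j-1}\ell_k \ge 0$. Hence the half-open intervals $[s_i,s_{i+1})$ and $[s_j,s_{j+1})$ are disjoint: any element of the first is strictly less than $s_{i+1}\le s_j$. This covers all pairs, so the family $\{[s_i,s_{i+1})\}_{i=1}^r$ is pairwise disjoint as a partition (up to possibly empty segments when $\ell_i=0$) of $[0,s_{r+1})$.

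Finally I would translate disjointness into race-freedom. By inspection of Pass~2 of Algorithm~\ref{alg:fbsp}, the thread assigned to row $i$ issues writes only to addresses of the form $s_i+j$ for $0\le j<\ell_i$, i.e.\ addresses in $[s_i,s_{i+1})$. Combined with the previous step, two threads $i\neq j$ address disjoint subsets of the buffer, so no memory cell is written by more than one thread. Since a write--write race requires two threads to target the same cell, none can occur, regardless of scheduling. The argument is structural (it does not reference the specific contents written), so it applies verbatim to any auxiliary buffer whose per-row offsets are derived from the same $\texttt{row\_ptr}$, establishing the second sentence of the theorem.

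The main obstacle, such as it is, is not mathematical but hygienic: one must be explicit that $\ell_i\ge 0$ (otherwise $s_{i+1}<s_i$ could break monotonicity), that the scan is \emph{exclusive} (so $s_{i+1}-s_i=\ell_i$ rather than $\ell_{i+1}$), and that the bound $j<\ell_i$ in the inner loop is strict (so writes never cross into segment $i+1$). These are all discharged by inspection of Algorithm~\ref{alg:fbsp} and of the standard semantics of the scan primitive \cite{Blelloch90}, so the proof is short and fully rigorous once the notation is fixed.
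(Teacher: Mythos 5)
Your proof is correct and follows essentially the same argument as the paper's: exclusive prefix-sum gives segment length $\ell_i$, monotonicity of partial sums gives interval disjointness, and disjoint write regions preclude write--write races. Your added remarks on hygiene (explicit nonnegativity of $\ell_i$, the exclusive vs.\ inclusive distinction, and strictness of the inner-loop bound) are sound elaborations rather than a different route.
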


\begin{proof}
By definition of exclusive prefix sums,
$\texttt{row\_ptr}[i+1]=\texttt{row\_ptr}[i]+\ell_i$, so the segment length for row $i$ equals $\ell_i$.
For $i<j$, we have $\texttt{row\_ptr}[i+1]\le \texttt{row\_ptr}[j]$, hence the intervals are disjoint.
Therefore no two rows write the same location, regardless of interleaving.  \qedhere
\end{proof}

\begin{thm}[Deterministic dictionary and indexing]
\label{thm:deterministic-dict}
Assume Assumption~\ref{ass:key-injective}.
Fix the batch inputs $(G,T,\mathsf{Adm})$ and assume the row-selection stage outputs the same $\mathcal{R}$
as a set with a deterministic ordering of rows (e.g.\ by metadata keys).
Then Algorithm~\ref{alg:fbsp} produces a unique dictionary $\mathcal{T}$ and a unique sparse plan $\Pi$,
independent of GPU thread scheduling.
\end{thm}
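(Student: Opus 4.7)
The plan is to propagate determinism stage by stage through Algorithm~\ref{alg:fbsp}, isolating each intermediate buffer (\texttt{len}, \texttt{row\_ptr}, \texttt{row\_key}/\texttt{row\_val}, \texttt{dict\_keys}, \texttt{col\_ind}) and showing it is a well-defined function of the algebraic inputs alone rather than of thread interleaving. Since Theorem~\ref{thm:racefree} already rules out write--write races in the two fill phases, what remains is to argue that the \emph{content} written at every stage is unambiguously specified by the preceding deterministic data.

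First I would dispatch the counting pass: task $i$ writes $\ell_i=|\supp(t_i g_{k_i})|$ into its own slot, a value read off the fixed \SOA storage of $g_{k_i}$, so \texttt{len} is scheduling-independent. The exclusive prefix-sum is then a mathematical function of \texttt{len} (any correct parallel scan returns the same numerical array), hence \texttt{row\_ptr} is fixed. In Pass~2, invoking Theorem~\ref{thm:racefree}, the $i$-th segment is written by exactly one task, and the $j$-th entry is the deterministically specified pair $(\textsf{MonKey}(t_i m_{k_i,j}),\,c_{k_i,j})$, enumerated in the $\prec$-order already baked into the \SOA layout of $g_{k_i}$. Thus \texttt{row\_key} and \texttt{row\_val} are uniquely determined.

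The delicate step is canonicalization, and this is where I expect the main obstacle. One must argue that \texttt{sort}${}+{}$\texttt{unique} is a deterministic \emph{output function} even when realized by different parallel sort implementations (radix vs.\ merge, different tile sizes, different work-stealing decisions). Assumption~\ref{ass:key-injective} is the load-bearing hypothesis: injectivity forces the sorted output to be a \emph{strictly} increasing sequence of keys, and a strictly increasing arrangement of a fixed multiset of distinct values is unique. Consequently \texttt{dict\_keys} equals the unique strictly increasing enumeration of the set $\{\textsf{MonKey}(u) : u\in\bigcup_i \supp(t_i g_{k_i})\}$, and the order-refinement clause of Assumption~\ref{ass:key-injective} identifies this with the $\prec$-sorted dictionary $\mathcal{T}=\{m_1\succ\cdots\succ m_N\}$. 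Without injectivity, \texttt{unique} would have to choose a representative per equivalence class, and that choice could be implementation-dependent; this is precisely the pathology the assumption is designed to preclude.

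Finally, the join step produces $\texttt{col\_ind}[s+j]$ as the position of $\texttt{row\_key}[s+j]$ inside \texttt{dict\_keys}. Since both arrays are sorted and keys are injective, every lookup has a unique correct answer; merge-path partitioning merely decides which warp executes which slice of the traversal, not the values written. Packaging the stage-by-stage deterministic outputs, the tuple $(\mathcal{T},\mathcal{R},\Pi)$ is a function of $(G,\mathcal{B},\prec)$ and the fixed row ordering of $\mathcal{R}$, independent of the GPU scheduler, which is the statement of the theorem.
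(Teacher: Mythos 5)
Your proposal is correct and takes essentially the same route as the paper: propagate determinism stage by stage, observe that injectivity together with the fixed order-refining key comparison makes \texttt{sort}$+$\texttt{unique} an implementation-independent function (the sorted output is a strictly increasing and hence unique arrangement of distinct keys), invoke Theorem~\ref{thm:racefree} for race-freedom of the fill phases, and note that the join produces uniquely determined indices. You spell out a few steps the paper leaves implicit — in particular that the prefix-sum is a function of \texttt{len} independent of scan strategy, and that merge-path partitioning only assigns work, never changes the written values — but the decomposition and the load-bearing use of Assumption~\ref{ass:key-injective} coincide with the paper's proof.
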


\begin{proof}
The multiset of emitted keys is determined solely by $\mathcal{R}$ and the supports of $t_i g_{k_i}$.
Sorting this multiset by \textsf{MonKey} and applying \texttt{unique} yields the same ordered list of
distinct keys every time because (i) the total order on keys is fixed and (ii) injectivity prevents collisions.
Row assembly computes dictionary indices as positions in this fixed sorted list; hence \texttt{col\_ind} is
uniquely determined.  By Theorem~\ref{thm:racefree}, writes are race-free, so the produced buffers are unique. \qedhere
\end{proof}

\subsection{Semantic correctness: FBSP materializes the intended Macaulay operator}
\label{subsec:fbsp-semantic}

We now prove that FBSP outputs the \emph{same} matrix operator as the algebraic Definition~3.1.

\begin{thm}[Dictionary correctness]
\label{thm:dict-correct}
Let $\mathcal{R}$ be the row set selected by Algorithm~\ref{alg:fbsp}.
Let $\mathcal{T}$ be the dictionary produced by \texttt{sort/unique}.
Then
\[
\mathcal{T} \;=\; \Big(\bigcup_{i=1}^r \supp(t_i g_{k_i})\Big) \quad \text{as a set,}
\]
and the ordering of $\mathcal{T}$ agrees with $\prec$ (via Assumption~\ref{ass:key-injective}).
\end{thm}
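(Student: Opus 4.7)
The plan is to split the claim into two parts: (i) set equality of $\mathcal{T}$ with the union of supports, and (ii) agreement of the induced linear order on $\mathcal{T}$ with $\prec$. Both parts are direct consequences of the design of Algorithm~\ref{alg:fbsp}, Theorem~\ref{thm:racefree} (which guarantees that the fill phase writes exactly the advertised keys into \texttt{row\_key} without overwrites), and Assumption~\ref{ass:key-injective}.

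First, I would establish that the multiset of keys written by Pass~2 into \texttt{row\_key} is exactly
\[
\biguplus_{i=1}^{r} \textsf{MonKey}\!\left(\supp(t_i g_{k_i})\right),
\]
which follows because Pass~2 iterates over each $(i,j)$ with $1\le i \le r$ and $0\le j<\ell_i$ and writes \textsf{MonKey}$(t_i\cdot m_{k_i,j})$ into a disjoint slot secured by the prefix-sum offsets (Theorem~\ref{thm:racefree}). Applying \texttt{sort/unique} to this multiset returns its underlying \emph{set}. Since \textsf{MonKey} is injective (Assumption~\ref{ass:key-injective}), two distinct monomials never alias under the key map; therefore the keys surviving \texttt{unique} are in bijection with $\bigcup_i \supp(t_i g_{k_i})$. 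Pulling back through \textsf{MonKey} then yields the claimed set equality $\mathcal{T}=\bigcup_{i=1}^{r}\supp(t_i g_{k_i})$.

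For the ordering, I would invoke the order-refinement clause of Assumption~\ref{ass:key-injective}: for all $u,v\in\Mon$, $u\prec v$ implies $\textsf{MonKey}(u)<\textsf{MonKey}(v)$. Because $\prec$ is a total order on $\Mon$, any two distinct monomials in $\mathcal{T}$ are $\prec$-comparable; combined with injectivity, the implication becomes an equivalence on $\mathcal{T}$, so the key order and $\prec$ coincide on distinct elements. Hence sorting \texttt{row\_key} by \textsf{MonKey} and then uniquing it produces $\mathcal{T}=\{m_1\succ m_2\succ\cdots\succ m_N\}$ in exactly the order induced by $\prec$ (up to the descending vs.\ ascending convention, which is fixed by the sort direction chosen in Algorithm~\ref{alg:fbsp}).

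The only delicate step is the bijection between post-\texttt{unique} keys and the set-theoretic union of supports: one must argue that neither spurious keys appear (ruled out because Pass~2 writes only images of true support monomials) nor genuine ones are dropped (ruled out because every monomial in $\supp(t_i g_{k_i})$ is written at least once, and \texttt{unique} preserves at least one representative of each equivalence class of equal keys). I expect this bookkeeping, together with the invocation of Assumption~\ref{ass:key-injective} to translate between $\Mon$ and $\mathcal{K}$, to be the main (and essentially only) point requiring care; everything else is a rephrasing of Theorem~\ref{thm:racefree} and the correctness of bulk \texttt{sort/unique} on a deterministically produced key array.
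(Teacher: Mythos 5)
Your proposal is correct and follows essentially the same route as the paper's proof: both directions of the set equality are established by noting that every emitted key arises from a support monomial (no spurious entries) and that every support monomial is emitted at least once (nothing is dropped by \texttt{unique}), with injectivity of \textsf{MonKey} preventing aliasing and the order-refinement clause of Assumption~\ref{ass:key-injective} giving agreement with $\prec$. Your version merely spells out the multiset bookkeeping and the appeal to Theorem~\ref{thm:racefree} in more detail than the paper does.
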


\begin{proof}
Every emitted key comes from a monomial in some $\supp(t_i g_{k_i})$, so the output of \texttt{unique}
is contained in the union. Conversely, each monomial in the union is emitted at least once,
so its key appears in the candidate stream and survives \texttt{unique}; hence equality holds as sets.
Order agreement follows from the order-refining property in Assumption~\ref{ass:key-injective}. \qedhere
\end{proof}

\begin{thm}[Matrix materialization correctness]
\label{thm:matrix-correct}
Assume Algorithm~\ref{alg:fbsp} produces $(\mathcal{T},\mathcal{R},\Pi)$ with
$\Pi=(\texttt{row\_ptr},\texttt{col\_ind},\texttt{val},\texttt{dict\_keys},\texttt{row\_meta})$.
Let $A\in\Fp^{r\times N}$ be the matrix defined by Definition~\ref{def:macaulay-batch} from
$(\mathcal{T},\mathcal{R})$.
Then the sparse buffers in $\Pi$ encode exactly the nonzero pattern and coefficients of $A$:
for each row $i$ and each term $u\in\supp(t_i g_{k_i})$, the algorithm writes
\[
\texttt{col\_ind}=\mathrm{index}_{\mathcal{T}}(u),\qquad \texttt{val}=[u]\,(t_i g_{k_i}),
\]
and no other nonzeros are written.
\end{thm}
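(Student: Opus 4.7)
The plan is to reduce Theorem~\ref{thm:matrix-correct} to the three preceding results of this section, together with one elementary fact about monomial multiplication. Theorem~\ref{thm:racefree} makes the notion of ``what the algorithm writes for row $i$'' well defined, since the disjoint-segment invariant ensures that per-row writes cannot be perturbed by another row's thread. Theorem~\ref{thm:deterministic-dict} removes any remaining scheduling ambiguity, and Theorem~\ref{thm:dict-correct} supplies the canonical indexing map $\mathrm{index}_{\mathcal{T}}:\mathcal{T}\to\{1,\dots,N\}$ that is exactly the one used in Definition~\ref{def:macaulay-batch}. What remains is a row-by-row check that Pass~2 fills the correct (key, value) multiset and that the join step assigns the correct column index to each key.

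For Pass~2, I would fix a row $i$ and write $g_{k_i}=\sum_{j=0}^{\ell_i-1} c_{k_i,j}\,m_{k_i,j}$ with $c_{k_i,j}\in\Fp\setminus\{0\}$ and $m_{k_i,0}\succ\cdots\succ m_{k_i,\ell_i-1}$. Because monomial multiplication is injective on $\Mon$ and $\Fp$ is an integral domain, no cancellation occurs in $t_i g_{k_i}$: its support equals $\{t_i m_{k_i,j}\}_{j=0}^{\ell_i-1}$ with coefficients $[t_i m_{k_i,j}]\,(t_i g_{k_i})=c_{k_i,j}$. Hence the $\ell_i$ pairs that Algorithm~\ref{alg:fbsp} writes into $[\texttt{row\_ptr}[i],\texttt{row\_ptr}[i+1])$ are in bijection with $\supp(t_i g_{k_i})$, carry the correct coefficients, and---because the segment length is exactly $\ell_i$---contain no placeholder or spurious entries.

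For the join step, I would first observe that each row segment is sorted in strictly descending key order: since $\prec$ is a monomial order, $m_{k_i,0}\succ\cdots$ implies $t_i m_{k_i,0}\succ\cdots$, and Assumption~\ref{ass:key-injective} then forces strictly decreasing $\mathsf{MonKey}$ values along the segment. The dictionary \texttt{dict\_keys} is likewise strictly decreasing after \texttt{sort}/\texttt{unique}, and by Theorem~\ref{thm:dict-correct} it contains every row key. A merge-style traversal therefore pairs each row key with its unique occurrence in the dictionary, recording that position into \texttt{col\_ind}; by construction this position equals $\mathrm{index}_{\mathcal{T}}(u)$ for $u=t_i m_{k_i,j}$. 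Combining this with the Pass~2 bijection, the algorithm emits exactly one triple $(i,\,\mathrm{index}_{\mathcal{T}}(u),\,[u](t_i g_{k_i}))$ per $u\in\supp(t_i g_{k_i})$, which is the defining incidence of $A$ in Definition~\ref{def:macaulay-batch}.

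The main obstacle is not any single algebraic identity but the ``no other nonzeros'' clause: one has to rule out the possibility that either pass writes a spurious entry into an output buffer. This is absorbed by three ingredients already available---Theorem~\ref{thm:racefree} (disjoint segments), the fact that Pass~2 writes exactly $\ell_i$ entries per row with no padding, and the uniqueness of the merge-based pairing against a sorted, duplicate-free dictionary---so a careful accounting of the algorithm's writes never produces a triple outside $\{(i,\mathrm{index}_{\mathcal{T}}(u),[u](t_i g_{k_i})):u\in\supp(t_i g_{k_i})\}$.
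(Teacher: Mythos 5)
Your proof is correct and follows essentially the same route as the paper's: reduce to Theorems~\ref{thm:racefree}, \ref{thm:deterministic-dict}, and \ref{thm:dict-correct}, then verify that Pass~2 populates each row segment with the support of $t_i g_{k_i}$ and that the merge-join assigns the correct dictionary index. You do spell out two details the paper leaves implicit—that multiplication by the monomial $t_i$ permutes $\supp(g_{k_i})$ bijectively and preserves $\prec$-order (hence no cancellation in $t_i g_{k_i}$ and hence the row segment is already sorted)—which is a welcome tightening rather than a different argument.
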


\begin{proof}
By Theorem~\ref{thm:dict-correct}, $u\in\mathcal{T}$, so $\mathrm{index}_{\mathcal{T}}(u)$ is defined.
Row assembly maps each emitted monomial key to this index by merge/join against the sorted dictionary,
and copies the corresponding coefficient into \texttt{val}.  Therefore the coefficient written at column
$\mathrm{index}_{\mathcal{T}}(u)$ equals the coefficient of $u$ in $t_i g_{k_i}$, which is precisely
$A_{i,\mathrm{index}_{\mathcal{T}}(u)}$ by Definition~\ref{def:macaulay-batch}.  No other monomials are emitted,
so no other nonzeros are written. \qedhere
\end{proof}

\subsection{Algebraic faithfulness: elimination/kernel steps correspond to admissible reductions}
\label{subsec:fbsp-algebraic}

The next theorem connects the numeric phase back to reduction semantics.  It is the precise statement
of why symbolic preprocessing must be ``truthful truncation'' (Definition~\ref{def:support-closure}).

\begin{thm}[Kernel vectors are (truncated) syzygies]
\label{thm:kernel-syzygy}
Assume $\mathcal{T}$ is support-closed for $\mathcal{R}$ (Definition~\ref{def:support-closure}).
Let $A$ be the batch matrix for $(\mathcal{T},\mathcal{R})$.
Then for $v\in\Fp^r$,
\[
v^\top A = 0 \quad\Longleftrightarrow\quad \sum_{i=1}^r v_i (t_i g_{k_i}) = 0 \ \text{in}\ \Fp[x_1,\dots,x_n].
\]
\end{thm}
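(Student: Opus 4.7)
The plan is to reduce the statement to Proposition~\ref{prop:kernel-syzygy} via the matrix materialization theorem just proved, and then to verify that the support-closure hypothesis of Definition~\ref{def:support-closure} is strong enough to make the coefficient embedding injective on the polynomial we care about. First I would unpack $v^\top A$ row-by-row: by Theorem~\ref{thm:matrix-correct}, the $i$-th row of $A$ is exactly $\mathrm{coeff}_{\mathcal{T}}(t_i g_{k_i})^\top$, so
\[
v^\top A \;=\; \sum_{i=1}^r v_i\, \mathrm{coeff}_{\mathcal{T}}(t_i g_{k_i})^\top.
\]
Then I would invoke $\Fp$-linearity of $\mathrm{coeff}_{\mathcal{T}}$ on its domain $\Rring_{\mathcal{T}}$ to pull the sum inside, yielding
\[
v^\top A \;=\; \mathrm{coeff}_{\mathcal{T}}\!\Big(\sum_{i=1}^r v_i (t_i g_{k_i})\Big)^{\!\top}.
\]

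Next I would justify that this last identity is well-posed, i.e.\ that $P := \sum_i v_i(t_i g_{k_i})$ genuinely lies in $\Rring_{\mathcal{T}}$. This is where support-closure enters: by Definition~\ref{def:support-closure}, $\mathcal{T}\supseteq \bigcup_i \supp(t_i g_{k_i})$, and $\supp$ is subadditive under $\Fp$-linear combinations, so $\supp(P)\subseteq \bigcup_i \supp(t_i g_{k_i})\subseteq \mathcal{T}$. Hence $P\in \Rring_{\mathcal{T}}$ and the coefficient embedding applies to it as claimed.

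With this in hand, both directions are immediate. If $P=0$ in $\Rring$, then $\mathrm{coeff}_{\mathcal{T}}(P)=0$ and so $v^\top A=0$. Conversely, if $v^\top A=0$, then $\mathrm{coeff}_{\mathcal{T}}(P)=0$; since $\mathrm{coeff}_{\mathcal{T}}$ is a linear isomorphism between $\Rring_{\mathcal{T}}$ and $\Fp^N$ (by construction, as the basis of $\Rring_{\mathcal{T}}$ is exactly $\mathcal{T}$), injectivity on $\Rring_{\mathcal{T}}$ forces $P=0$ in $\Rring$.

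The main obstacle, such as it is, is purely hypothesis-tracking rather than computation: one must be careful that the support-closure assumption is invoked on the combination $P$ and not merely on each individual $t_i g_{k_i}$. Without closure, the reverse direction would fail in general, since $v^\top A=0$ would only certify vanishing of the projection $\pi_{\mathcal{T}}(P)$ and monomials of $P$ outside $\mathcal{T}$ would be invisible to $A$; this is exactly the truncation pathology flagged in Remark~\ref{rem:truncation}. Making this caveat explicit in the write-up is the only substantive care needed; the algebra itself is a one-line consequence of the preceding materialization theorem.
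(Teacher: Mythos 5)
Your proposal is correct and follows essentially the same route as the paper's proof (which in turn mirrors Proposition~\ref{prop:kernel-syzygy}): identify $v^\top A$ with $\mathrm{coeff}_{\mathcal{T}}$ of the linear combination, use support-closure to place that combination in $\Rring_{\mathcal{T}}$, and conclude by injectivity of the coefficient embedding. The extra care you take in citing Theorem~\ref{thm:matrix-correct} and in flagging that closure must apply to the combination rather than just the individual rows is sound bookkeeping, not a different argument.
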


\begin{proof}
The forward direction holds because $v^\top A$ lists all coefficients of $\sum_i v_i(t_i g_{k_i})$ in the
basis $\mathcal{T}$.  Support-closure ensures that polynomial has no monomials outside $\mathcal{T}$,
so $v^\top A=0$ implies every coefficient is zero, hence the polynomial is zero.
The reverse direction is immediate by coefficient comparison. \qedhere
\end{proof}

\begin{thm}[Row operations realize admissible batched reduction]
\label{thm:rowops-reduction}
Assume $\mathcal{T}$ is truthful for the intended closure rule (Definition~\ref{def:support-closure} plus
any one-step reduction closure required by the algorithm).
Then any sequence of elementary row operations performed on $A$ over $\Fp$
corresponds to forming $\Fp$-linear combinations of the shifted reducers in $\mathcal{R}$, i.e.,
to forming polynomials in the span of $\{t_i g_{k_i}\}$ whose supports lie in $\mathcal{T}$.
In particular, the reduced rows produced by elimination correspond to reduced polynomials for the batch.
\end{thm}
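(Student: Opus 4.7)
The plan is to lift the elementary row-operation calculus on $A$ back to polynomial arithmetic through the coefficient isomorphism $\mathrm{coeff}_{\mathcal{T}}\colon \Rring_{\mathcal{T}} \to \Fp^N$ introduced in Section~\ref{sec:kernel-view}. By Definition~\ref{def:macaulay-batch} the $i$-th row of $A$ is exactly $\mathrm{coeff}_{\mathcal{T}}(t_i g_{k_i})^\top$, and by Definition~\ref{def:support-closure} every shifted reducer lies in $\Rring_{\mathcal{T}}$. Because $\mathrm{coeff}_{\mathcal{T}}$ is an $\Fp$-linear isomorphism between $\Rring_{\mathcal{T}}$ and $\Fp^N$, the entire proof can be organized as a change-of-basis argument that transports row operations to polynomial operations without loss.

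First I would verify the correspondence on each elementary row operation and close under composition. Scaling row $i$ by $\lambda\in\Fp$ is the image under $\mathrm{coeff}_{\mathcal{T}}$ of replacing $t_i g_{k_i}$ by $\lambda(t_i g_{k_i})$; swapping rows is mere reindexing of $\mathcal{R}$; and adding $\lambda$ times row $i$ to row $j$ pulls back to replacing $t_j g_{k_j}$ by $(t_j g_{k_j}) + \lambda(t_i g_{k_i})$. Since $\Rring_{\mathcal{T}}$ is an $\Fp$-subspace of $\Rring$, it is closed under all such replacements, so by induction on the number of elementary operations the polynomial attached to the $j$-th transformed row has the form $h_j = \sum_{i=1}^r c_{j,i}(t_i g_{k_i})$ for some $c_{j,i}\in\Fp$ recording the operation history, and in particular $\supp(h_j)\subseteq\mathcal{T}$. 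This establishes the main assertion.

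For the ``in particular'' clause I would use that $\mathcal{T}=\{m_1\succ\cdots\succ m_N\}$ is $\prec$-descending: the first nonzero column of $\mathrm{coeff}_{\mathcal{T}}(h_j)$ is then exactly $\LM(h_j)$, because support-closure forbids any $\prec$-larger monomial from being hidden outside $\mathcal{T}$. Hence any echelon form computed on $A$ delivers rows whose pivot columns coincide with the distinct leading monomials of the $h_j$, and further back-substitution cancels those monomials throughout the batch. This is precisely the operational meaning of admissible batched reduction within $\spanv_{\Fp}\{t_i g_{k_i}\}$.

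The main obstacle, and the sole place where truthfulness is genuinely used, is this last identification: that linear-algebraic ``reducedness'' of a row faithfully mirrors polynomial ``reducedness within the batch''. If the closure rule demanded a monomial absent from $\mathcal{T}$, the corresponding pivot cancellation on $A$ would have no polynomial witness and would silently project a nonzero term to zero (compare Remark~\ref{rem:truncation}); the leading-monomial reading of a pivot column could then be strictly larger than the true $\LM$ of the lifted polynomial. Under the truthfulness hypothesis this pathology is excluded by construction, so $\mathrm{coeff}_{\mathcal{T}}$ commutes not only with $\Fp$-linear combinations but with the leading-monomial extraction driving elimination, yielding the desired faithfulness.
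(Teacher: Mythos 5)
Your proposal is correct and follows essentially the same route as the paper's proof: identify each row of $A$ with $\mathrm{coeff}_{\mathcal{T}}(t_i g_{k_i})$, observe that elementary row operations are $\Fp$-linear combinations and hence pull back through the coefficient isomorphism to polynomial combinations supported in $\mathcal{T}$, and invoke truthfulness to make the linear-algebraic reduction semantics agree with the polynomial one. Your version is simply a fuller elaboration — the explicit case analysis of the three elementary operations, the induction on operation count, and the pivot-column-equals-$\LM$ identification are all details the paper's terse proof leaves implicit — but no new idea or different decomposition is involved.
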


\begin{proof}
An elementary row operation replaces a row by an $\Fp$-linear combination of rows.
By Theorem~\ref{thm:matrix-correct}, each row is the coefficient vector of a shifted reducer in the basis
$\mathcal{T}$.  Therefore any row combination is the coefficient vector of the corresponding polynomial
combination.  Truthfulness ensures that all monomials needed by the reduction semantics are represented in
$\mathcal{T}$, so the numeric reductions agree with polynomial reductions for the batch. \qedhere
\end{proof}

\subsection{Signature regime: correctness under admissibility predicates}
\label{subsec:fbsp-signatures}

Finally we state a correctness guarantee for $F_5$-type admissibility: if $\mathsf{Adm}$ encodes a sound
signature discipline, then FBSP never materializes forbidden reductions.

\begin{thm}[Signature-sound compilation]
\label{thm:signature-sound}
Assume the admissibility predicate $\mathsf{Adm}$ is \emph{sound} in the sense of the signature-based
Gr\"obner framework: it filters out exactly those shifted reducers that would correspond to forbidden
(signature-increasing or rewritable) reductions.
Then Algorithm~\ref{alg:fbsp}, applied with this $\mathsf{Adm}$, produces a matrix $A$ whose row space
contains only admissible reduction steps; consequently, elimination/kernel extraction on $A$ cannot create
a polynomial that violates the signature admissibility rules.
\end{thm}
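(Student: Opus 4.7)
The plan is to reduce the statement to three previously established facts (the row selection step of Algorithm~\ref{alg:fbsp}, the matrix materialization correctness of Theorem~\ref{thm:matrix-correct}, and the row-operations-as-reductions correspondence of Theorem~\ref{thm:rowops-reduction}) together with a single invocation of the soundness hypothesis on $\mathsf{Adm}$. Nothing new about the signature calculus itself need be proved; the key move is to transfer the signature discipline from individual rows to their $\Fp$-linear combinations.

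First I would pin down what the rows are. Step~1 of Algorithm~\ref{alg:fbsp} constructs $\mathcal{R}=\{(t_i,g_{k_i})\}_{i=1}^{r}$ by filtering the candidate pool through $\mathsf{Adm}$, so $\mathsf{Adm}(t_i,g_{k_i})=\mathrm{true}$ for every $i$. By Theorem~\ref{thm:matrix-correct}, the $i$-th row of $A$ is the coefficient vector $\mathrm{coeff}_{\mathcal{T}}(t_i g_{k_i})$, and no row of $A$ arises from any other source. Hence the generators of the row space $\mathrm{Row}(A)$ are in bijection with admissible shifted reducers, so soundness of $\mathsf{Adm}$ applies to each generator individually.

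Next I would lift row-space elements to the free module. For $v\in\Fp^r$, Theorem~\ref{thm:rowops-reduction} identifies $v^\top A$ with the coefficient vector of $\sum_i v_i(t_i g_{k_i})$, which is the image under $\varphi:F\to I$ of the module element
\[
s_v \;=\; \sum_{i=1}^{r} v_i\, t_i\, e_{k_i}\ \in\ F=\Rring^m,
\]
where $e_{k_i}$ is the free-module generator attached to $g_{k_i}$ in Section~\ref{sec:foundations}. Because each summand $t_i e_{k_i}$ carries a signature that is legal by soundness of $\mathsf{Adm}$, and because the Schreyer-type signature of $s_v$ is bounded by the $\prec$-maximum of the summand signatures, $s_v$ itself is signature-legal for the regime in force. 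Thus every polynomial in $\mathrm{Row}(A)$ is the $\varphi$-image of an admissible module element, which is exactly the notion of an admissible reduction step in the signature framework cited in Section~\ref{sec:foundations}.

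Finally I would apply this to the two numeric outputs of interest. Echelon-form rows produced by elimination lie in $\mathrm{Row}(A)$ by construction, so by the previous paragraph they correspond to admissible reductions. Kernel vectors $v$ satisfying $v^\top A=0$ correspond, via Theorem~\ref{thm:kernel-syzygy}, to identities $\sum_i v_i(t_i g_{k_i})=0$, i.e.\ to truncated syzygies $s_v\in\kerop(\varphi)$ assembled from admissible summands; no forbidden reducer can be fabricated because none was ever placed in $\mathcal{R}$. The hard part is the signature-monotonicity step in the second paragraph: the paper keeps $\mathsf{Adm}$ abstract, so \emph{admissibility is preserved under $\Fp$-linear combination} must either be made part of the soundness definition or discharged inside a specific signature regime (e.g.\ classical $F_5$ or $G^2V$), where the bound $\mathrm{sig}(s_v)\preceq\max_i \mathrm{sig}(t_i e_{k_i})$ is a standard lemma. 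Once that compatibility is in hand, the theorem follows directly.
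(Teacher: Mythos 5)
Your proposal takes essentially the same route as the paper's own argument (which is explicitly only a proof sketch): both rest on the observation that every row of $A$ is an admissible shifted reducer because of the $\mathsf{Adm}$-filtering in Step~1, so the row space is generated entirely by admissible elements and no forbidden reducer can be fabricated by $\Fp$-linear combination. Your version is in fact more careful than the paper's: the explicit lift to $s_v=\sum_i v_i\,t_i e_{k_i}\in F$ and the caveat you flag---that admissibility must be shown stable under linear combination, e.g.\ via $\mathrm{sig}(s_v)\preceq\max_i \mathrm{sig}(t_i e_{k_i})$ in the chosen signature order---is precisely the step the paper's sketch waves away with ``formal details depend on the particular signature order and rewrite criterion adopted,'' so you have correctly located the one genuine gap common to both arguments rather than introduced a new one.
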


\begin{proof}[Proof sketch]
All numeric combinations are formed from the span of the rows of $A$.
By construction, rows correspond bijectively to admissible shifted reducers (row selection + filtering).
Therefore any combination corresponds to a combination of admissible reducers.  Since forbidden reductions
are excluded at the row-selection stage, they cannot be generated later by linear combination.
Formal details depend on the particular signature order and rewrite criterion adopted, but the mechanism is
uniform: admissibility is enforced structurally by restricting the generating set. \qedhere
\end{proof}

\subsection{Complexity lower bound for any truthful symbolic preprocessing}
\label{subsec:fbsp-lowerbound}

\begin{thm}[Key-traffic lower bound]
\label{thm:key-traffic-lb}
Let $\mathcal{R}$ be fixed and let $M=\sum_i |\supp(t_i g_{k_i})|$.
Any symbolic preprocessing algorithm that outputs a truthful dictionary $\mathcal{T}$
must, in the worst case, read or generate $\Omega(M)$ monomial occurrences (keys),
hence incurs $\Omega(M)$ memory traffic in the RAM/GPU global-memory model.
\end{thm}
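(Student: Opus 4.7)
My approach is to combine a worst-case instance construction with an output-size argument, and then to strengthen the result on the input-reading side by a standard indistinguishability (adversary) argument. The main engine is that \emph{truthfulness forces the output dictionary to be large in the worst case}, and any large output already demands large memory traffic regardless of implementation.

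First I would fix a worst-case family of row sets. Choose the shifts $(t_i)_{i=1}^r$ and the reducers $g_{k_i}$ so that the shifted supports $t_1 \supp(g_{k_1}), \dots, t_r \supp(g_{k_r})$ are pairwise disjoint in $\Mon$; such instances are easy to construct, e.g.\ by shifting into separated monomial cones using high, distinct degrees in disjoint variable blocks. For this family
\[
\Big|\bigcup_{i=1}^r \supp(t_i g_{k_i})\Big| \;=\; \sum_{i=1}^r |\supp(t_i g_{k_i})| \;=\; M,
\]
so by Definition~\ref{def:support-closure} every truthful dictionary $\mathcal{T}$ must contain at least these $M$ distinct monomials. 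Each must be written into \texttt{dict\_keys} as at least one machine word, giving $\Omega(M)$ words of output and hence $\Omega(M)$ global-memory writes. This alone yields the theorem on the ``generation'' side, since every distinct output key must be produced by the algorithm before being written.

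Second, to also account for \emph{reads}, I would run an adversary argument that prevents the algorithm from skipping parts of the compact input (pairs $(t_i,k_i)$ with \SOA-stored $g_{k_i}$, cf.\ Section~7). Suppose, toward contradiction, that an algorithm touches strictly fewer than $M$ monomial occurrences across all shifted supports (reads plus generated keys). Then some occurrence $t_i m$ with $m \in \supp(g_{k_i})$ is never inspected. The adversary perturbs $g_{k_i}$ to a polynomial $g'$ differing from $g_{k_i}$ only at $m$, swapping $m$ for a fresh monomial chosen outside the pairwise-disjoint union. Every memory cell the algorithm probed is identical in both inputs, so its execution transcript and its produced $\mathcal{T}$ coincide; yet the two truthful dictionaries differ, contradicting correctness. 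Hence $\Omega(M)$ reads-or-generated-keys are unavoidable, and each incurs $\Omega(1)$ amortized words of traffic even under coalesced access in the RAM/GPU global-memory model.

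The main obstacle is this second step: making the indistinguishability argument rigorous requires pinning down a computational model (I would use a cell-probe / decision-tree model over monomial keys) and then verifying that the algorithm's output is a function only of the cells it has probed. The algebraic freedom of $\Mon$ under multiplication makes the local perturbation at $m$ straightforward, so once the model is fixed the argument is routine; if one is willing to accept only the weaker output-side bound, the first step already closes the theorem with no model subtleties at all.
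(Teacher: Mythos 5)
Your proof follows the same strategy as the paper's: exhibit a worst-case instance in which all $M$ monomial occurrences are distinct (forcing $|\mathcal{T}|=M$), and then argue by indistinguishability that an algorithm failing to process $\Omega(M)$ keys could not tell apart two instances differing in a single unseen occurrence, contradicting truthfulness. Your version is more careful than the paper's terse sketch — you cleanly separate the output-size bound (writing $\mathcal{T}$ alone costs $\Omega(M)$ words) from the read-side adversary argument, and you correctly flag that the latter requires fixing a cell-probe-style model in which the output is a function only of probed cells, a subtlety the paper leaves implicit.
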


\begin{proof}
Each monomial occurrence in the multiset union of supports can be distinct in the worst case.
To guarantee truthfulness (i.e.\ that $\mathcal{T}$ contains all such monomials), the algorithm must
inspect/generate information identifying them; otherwise it cannot distinguish two instances that differ
on a single unseen occurrence.  Therefore $\Omega(M)$ key information must be processed. \qedhere
\end{proof}

\section{Implementation Blueprint and Benchmark Protocol (No Results Reported)}
\label{sec:impl-protocol}

This section specifies an implementation blueprint and a reproducible benchmark protocol for validating
Sections~\ref{sec:fbsp-algorithm}--\ref{sec:fbsp-correctness}.  We report no empirical results here.

\subsection{Kernel decomposition}
We decompose the pipeline into three timed kernels:
(i) \textbf{DictBuild}: emit keys + radix-sort + unique (output \texttt{dict\_keys}),
(ii) \textbf{RowAssemble}: merge/join each row segment against \texttt{dict\_keys} (output \texttt{col\_ind,val}),
(iii) \textbf{NumericCore}: either PSGE panel update or SpMV/SpMM for block Krylov.

\subsection{Microbenchmarks}
\begin{enumerate}[leftmargin=2em]
\item \textbf{Dictionary-build throughput.}
Input: synthetic key streams with controlled duplicate rate and key width.
Metrics: keys/s, bytes/s, number of radix passes, peak memory footprint.
\item \textbf{Row-assembly join throughput.}
Input: row-length distributions matching SELL-$C$-$\sigma$ buckets.
Metrics: joins/s, divergence proxy (active lanes), global-load efficiency.
\item \textbf{Modular arithmetic inner loop.}
Input: randomized residues modulo $p$; test Barrett vs.\ Montgomery update kernels.
Metrics: updates/s, register count, instruction mix, spill rate.
\end{enumerate}

\subsection{Algebraic benchmark families (standard Gröbner instances)}
We propose three benchmark families (parameterized):
\begin{itemize}[leftmargin=2em]
\item \textbf{Cyclic-$n$ systems} (classical Gröbner stress tests),
\item \textbf{Katsura-$n$ systems} (structured polynomial systems),
\item \textbf{Dense random quadratic systems over $\Fp$} (controlled sparsity/degree growth).
\end{itemize}
For each family and parameter, the protocol records:
total time split by stages (DictBuild / RowAssemble / NumericCore),
dictionary size $N$, key volume $M$, $\mathrm{nnz}(A)$, and (if elimination is used) observed fill proxy.


\section{Complexity Analysis: Temporal and Spatial Costs in $n$ and $d$}
\label{sec:complexity}

This section makes explicit which quantities control end-to-end cost in our pipeline:
the \emph{combinatorial scale} of monomials (driven by $(n,d)$), the \emph{symbolic volume}
of candidate terms (driven by supports of shifted reducers), and the \emph{numeric strategy}
(PSGE vs.\ Krylov) chosen in Section~\ref{sec:psge-vs-krylov}.
Throughout, we separate (A) algebraic growth that is intrinsic to Gröbner bases
and (B) architectural costs that can be improved by the compilation view of
Sections~\ref{sec:symbolic-bottleneck} and \ref{sec:fbsp-algorithm}.

\subsection{Combinatorial scale: monomials and degree growth}
\label{subsec:monomial-scale}

Let $\Mon_{\le d}$ be the set of monomials in $\Fp[x_1,\dots,x_n]$ of total degree $\le d$.
Its cardinality is
\begin{equation}
\label{eq:monomial-count}
N(n,d)=|\Mon_{\le d}|=\binom{n+d}{d}.
\end{equation}
Any degree-driven Gröbner strategy that forms Macaulay-type matrices at degree $d$
must index columns by a subset $\mathcal{T}\subseteq\Mon_{\le d}$, hence
\[
|\mathcal{T}| \le N(n,d).
\]
The decisive point is that $d$ itself is not a free parameter: it is determined by the
degree at which the algorithm reaches a stable Gröbner basis. In worst-case constructions,
the maximal degree occurring in a Gröbner basis can be extremely large (indeed, doubly-exponential
behavior in $n$ occurs in general), which is one precise sense in which the difficulty is
structural rather than accidental \cite{Dube90,MayrRitscher2013}.

\subsection{Matrix shape parameters induced by a batch}
\label{subsec:batch-parameters}

Fix a batch with shifted reducers
\[
\mathcal{R}=\{(t_i,g_{k_i})\}_{i=1}^r
\qquad\text{(Section~\ref{sec:kernel-view})}.
\]
Write $\ell_i := |\supp(t_i g_{k_i})|$ for the number of terms in row $i$ after shifting
(and, if needed, combining equal monomials inside the row).
Define the \emph{row-nonzero volume}
\begin{equation}
\label{eq:M-def}
M := \sum_{i=1}^r \ell_i,
\end{equation}
and the \emph{dictionary size}
\[
N := |\mathcal{T}|.
\]
In an idealized “no intra-row collisions” model, the sparse Macaulay matrix $A$ materialized
from $(\mathcal{T},\mathcal{R})$ satisfies $\mathrm{nnz}(A)=M$; in general, $\mathrm{nnz}(A)\le M$
after intra-row aggregation.

These are the two levers of the symbolic stage: \emph{$M$ measures how much term material must be
generated and streamed}, while \emph{$N$ measures how wide the global dictionary is}.
Both depend on algebraic choices (which reducers are admissible, and which closure rule defines $\mathcal{T}$),
which is why symbolic preprocessing (Section~\ref{sec:symbolic-bottleneck}) is the right place to optimize.

\subsection{Symbolic preprocessing cost under FBSP (two-pass compilation)}
\label{subsec:symbolic-cost}

FBSP (Algorithm~\ref{alg:fbsp}) realizes $\mathsf{SP}(\mathcal{B})$ as a bulk pipeline:
count $\rightarrow$ scan $\rightarrow$ fill $\rightarrow$ sort/unique $\rightarrow$ join.
We quantify its costs in terms of $(M,N)$.

\paragraph{Pass structure and work.}
\begin{itemize}[leftmargin=2em]
\item \textbf{Count + scan.} Computing all $\ell_i$ is $O(r)$ work once supports are known,
      followed by one global prefix-sum on $r$ integers to produce \texttt{row\_ptr}.
      This is the standard “write-once sparse allocation” primitive \cite{Blelloch90}.
\item \textbf{Fill (materialize shifted rows).} Writing the term stream
      (\texttt{row\_key}, \texttt{row\_val}) is $O(M)$ work and $O(M)$ contiguous writes.
\item \textbf{Sort/unique (dictionary canonicalization).}
      Sorting $M$ fixed-width keys with a radix strategy uses a constant number of passes
      (determined only by key width and radix), and is therefore $O(M)$ work per pass;
      uniqueness is a linear sweep over the sorted stream \cite{SatishHarrisGarland2009}.
\item \textbf{Join (row indexing against the dictionary).}
      Mapping row keys to dictionary indices is a \emph{sorted join} between each row segment
      and \texttt{dict\_keys}. A merge-path style partitioning yields linear-time joins with
      good load balance across warps/blocks \cite{GreenMcCollBader2012}.
\end{itemize}

\paragraph{Bandwidth-dominated cost model.}
Once the pipeline is organized around flat arrays, the dominant term is not arithmetic but
global memory traffic:
FBSP must read/write $\Theta(M)$ keys and values and must store $\Theta(N)$ dictionary keys.
Thus, for a fixed architecture, the wall-time is constrained below by a bandwidth term
proportional to the total bytes moved.
This is exactly the motivation for Sections~\ref{sec:parallelism}--\ref{sec:fbsp-algorithm}:
reduce pointer chasing and replace random insertions by streaming primitives.

\subsection{Numeric phase: fill-in vs.\ repeated SpMV/SpMM}
\label{subsec:numeric-cost}

After compilation, the numeric phase chooses between elimination-style and Krylov-style kernels
(Section~\ref{sec:psge-vs-krylov}). The complexity hinge is whether elimination induces prohibitive fill.

\paragraph{Elimination proxy (PSGE / sparse direct).}
Let $\mathrm{nnz}(A)$ be the initial nonzeros and let $\mathrm{nnz}(L)+\mathrm{nnz}(U)$ denote the
nonzeros generated in the factors (or the analogous fill produced by structured elimination schedules).
Then both time and space scale with the fill pattern:
\[
\text{space} \;\propto\; \mathrm{nnz}(L)+\mathrm{nnz}(U),
\qquad
\text{time} \;\propto\; \text{(updates touching those nonzeros)}.
\]
The dependence on ordering/scheduling is fundamental in sparse direct methods \cite{Davis06}.
PSGE aims to control this by forcing panel locality compatible with $\Pi$ (Section~\ref{sec:psge-vs-krylov}),
but it cannot remove fill as an algebraic phenomenon.

\paragraph{Krylov proxy (Block Wiedemann/Lanczos).}
Krylov solvers replace fill-sensitive factorization by repeated applications of $A$ (and often $A^\top$)
to vectors or blocks of vectors \cite{Wiedemann86,Coppersmith94}.
If the block width is $b$, the dominant kernel becomes SpMM (apply $A$ to $b$ vectors at once),
whose per-iteration cost scales with $\mathrm{nnz}(A)\cdot b$ under a fixed sparse format.
This is precisely the regime where layout decisions from Section~7 (e.g.\ SELL-$C$-$\sigma$-style slicing)
matter, because they control the achievable bandwidth of SpMV/SpMM kernels \cite{KreutzerHagerWellein2014}.

\begin{rem}[Where $(n,d)$ enters the hardware story]
The parameters $(n,d)$ act primarily through $N(n,d)$ and through the induced batch shape $(M,N)$.
The algebra chooses which reducers and which closure rule determine $\mathcal{T}$; the compiler $\mathsf{SP}$
turns that choice into an executable sparse operator $A$.
Hence, even though degree growth is an algebraic worst-case phenomenon \cite{Dube90,MayrRitscher2013},
the \emph{practical} performance is decided earlier than elimination: it is decided at the moment
$\mathcal{T}$ and $\Pi$ are fixed (Sections~\ref{sec:symbolic-bottleneck} and \ref{sec:fbsp-algorithm}),
because that fixes both the memory traffic of compilation and the sparse layout seen by the numeric kernels.
\end{rem}

\section{Discussion: Integrating Signatures, Admissibility, and \GPU Scheduling}
\label{sec:discussion}

In signature-based pipelines (initiated by $F_5$), admissibility $\mathsf{Adm}$ depends on module monomials:
a signature is typically a module term $x^\alpha e_i$ equipped with an order (position-over-term, Schreyer-type, etc.).
The operational constraint is: reductions are permitted only if they \emph{decrease} the signature in the chosen order,
and rewrite criteria prevent generating elements whose signature is already ``covered'' \cite{FaugereF5,EderFaugere2017}.

\paragraph{GPU-facing reformulation.}
The architectural goal is to ensure that signature logic appears only in \emph{bulk filtering} and \emph{scheduling},
not in inner loops. Concretely:
\begin{itemize}[leftmargin=2em]
\item encode each candidate row with a packed \emph{signature key} (module monomial + index),
\item evaluate $\mathsf{Adm}$ as a data-parallel predicate over row metadata,
\item compact the survivors by scan/stream compaction (a standard GPU primitive) \cite{Blelloch1990,BontesGain2024},
\item bucket/schedule rows by (degree, length, signature stratum) so that subsequent merge/assembly kernels exhibit low divergence.
\end{itemize}

\paragraph{Signature-aware closure.}
When $\mathsf{Adm}$ excludes certain reducers, the closure rules used to build $\mathcal{T}$ must be interpreted
\emph{relative to the admissible reducer set}. Otherwise one risks materializing columns that correspond to
reductions that cannot occur, wasting memory bandwidth, or worse, omitting columns needed by admissible reductions.
This suggests an iterative (but still bulk) fixed-point: propose rows $\rightarrow$ filter by signatures $\rightarrow$
close dictionary for the filtered set $\rightarrow$ recheck coverage.


\section{Conclusion and Open Problems}
\label{sec:conclusion}

We isolated the dominant barrier to massively parallel Gr\"obner computation:
\emph{symbolic preprocessing is the compilation stage} that converts syzygy-structured algebra into a truthful sparse
linear-algebra instance. Once this compilation outputs static, deterministic buffers, the numeric phase can leverage
high-throughput GPU sparse kernels over $\Fp$.

\paragraph{Open problems (research-level).}
\begin{itemize}[leftmargin=2em]
\item \textbf{Provable sparsity control under admissibility:}
design closure strategies for $\mathcal{T}$ that minimize downstream fill-in while remaining correct under signature constraints.
\item \textbf{Certified pivot scheduling:}
develop term-order- and signature-aware block pivot schedules with algebraic invariants that guarantee admissibility by construction.
\item \textbf{Hybrid elimination--Krylov thresholds:}
derive regime conditions (in terms of $\mathrm{fill}(A)$ versus SpMV iteration complexity) under which partial elimination plus
kernel extraction dominates full elimination.
\item \textbf{Module-theoretic GPU invariants:}
encode signature/module constraints directly into memory layouts and schedules so that kernels enforce admissibility structurally,
not via branch-heavy checks.
\end{itemize}


\bibliographystyle{alphaurl}
\bibliography{refs}

@inproceedings{Barrett87,
  author    = {Barrett, P.},
  title     = {Implementing the Rivest--Shamir--Adleman public key encryption algorithm on a standard digital signal processor},
  booktitle = {Advances in Cryptology---CRYPTO '86},
  editor    = {Odlyzko, A. M.},
  series    = {Lecture Notes in Computer Science},
  volume    = {263},
  publisher = {Springer},
  year      = {1987},
  pages     = {311--323},
}

@techreport{BellGarland09,
  author      = {Bell, N. and Garland, M.},
  title       = {Efficient sparse matrix-vector multiplication on CUDA},
  institution = {NVIDIA},
  number      = {NVR-2008-004},
  year        = {2008},
}

@techreport{Blelloch1990,
  author      = {Blelloch, Guy E.},
  title       = {Prefix sums and their applications},
  institution = {Carnegie Mellon University},
  number      = {CMU-CS-90-190},
  year        = {1990},
}

@techreport{Blelloch90,
  author      = {Blelloch, Guy E.},
  title       = {Prefix sums and their applications},
  institution = {Carnegie Mellon University},
  number      = {CMU-CS-90-190},
  year        = {1990},
}

@article{BontesGain2024,
  author  = {Bontes, V. and Gain, J.},
  title   = {Efficient and portable parallel stream compaction},
  journal = {Proceedings of the ACM on Programming Languages},
  volume  = {8},
  year    = {2024},
  note    = {(ARRAY), Article 164, 26 pp.},
  doi     = {10.1145/3652584.3657596},
}

@incollection{Buchberger06,
  author    = {Buchberger, B.},
  title     = {Gr{\"o}bner bases: an algorithmic method in polynomial ideal theory},
  booktitle = {Gr{\"o}bner Bases and Applications},
  editor    = {Buchberger, B. and Winkler, F.},
  series    = {London Mathematical Society Lecture Note Series},
  number    = {251},
  publisher = {Cambridge University Press},
  year      = {1998},
  pages     = {184--232},
  note      = {Reprinted 2006.},
}

@book{CLO25,
  author    = {Cox, D. and Little, J. and O'Shea, D.},
  title     = {Ideals, Varieties, and Algorithms},
  edition   = {5},
  publisher = {Springer},
  year      = {2025},
  doi       = {10.1007/978-3-031-91841-4},
}

@manual{CUDAProgGuide,
  author       = {{NVIDIA}},
  title        = {CUDA C++ Programming Guide},
  organization = {NVIDIA},
  year         = {2025},
  note         = {Release 13.1},
  url          = {https://docs.nvidia.com/cuda/cuda-c-programming-guide/},
}

@article{Coppersmith94,
  author  = {Coppersmith, D.},
  title   = {Solving homogeneous linear equations over $\mathbb{F}_2$ via block Wiedemann algorithm},
  journal = {Mathematics of Computation},
  volume  = {62},
  year    = {1994},
  pages   = {333--350},
  doi     = {10.1090/S0025-5718-1994-1206730-6},
}

@book{Davis06,
  author    = {Davis, T. A.},
  title     = {Direct Methods for Sparse Linear Systems},
  publisher = {SIAM},
  year      = {2006},
}

@article{Dube90,
  author  = {Dub{\'e}, T. W.},
  title   = {The structure of polynomial ideals and Gr{\"o}bner bases},
  journal = {SIAM Journal on Computing},
  volume  = {19},
  year    = {1990},
  pages   = {750--773},
  doi     = {10.1137/0219053},
}

@inproceedings{EderFaugere17,
  author    = {Eder, C. and Faug{\`e}re, J.-C.},
  title     = {A survey on signature-based algorithms for computing Gr{\"o}bner bases},
  booktitle = {Computer Algebra and Polynomials},
  editor    = {Guti{\'e}rrez, J. and Schicho, J. and Weimann, M.},
  series    = {Lecture Notes in Computer Science},
  volume    = {10532},
  publisher = {Springer},
  year      = {2017},
  pages     = {74--100},
}

@inproceedings{EderFaugere2017,
  author    = {Eder, C. and Faug{\`e}re, J.-C.},
  title     = {A survey on signature-based algorithms for computing Gr{\"o}bner bases},
  booktitle = {Computer Algebra and Polynomials},
  editor    = {Guti{\'e}rrez, J. and Schicho, J. and Weimann, M.},
  series    = {Lecture Notes in Computer Science},
  volume    = {10532},
  publisher = {Springer},
  year      = {2017},
  pages     = {74--100},
}

@book{Eisenbud95,
  author    = {Eisenbud, D.},
  title     = {Commutative Algebra with a View Toward Algebraic Geometry},
  series    = {Graduate Texts in Mathematics},
  volume    = {150},
  publisher = {Springer},
  year      = {1995},
}

@article{F4,
  author  = {Faug{\`e}re, J.-C.},
  title   = {A new efficient algorithm for computing Gr{\"o}bner bases ($F_4$)},
  journal = {Journal of Pure and Applied Algebra},
  volume  = {139},
  year    = {1999},
  pages   = {61--88},
  doi     = {10.1016/S0022-4049(99)00005-5},
}

@inproceedings{F5,
  author    = {Faug{\`e}re, J.-C.},
  title     = {A new efficient algorithm for computing Gr{\"o}bner bases without reduction to zero ($F_5$)},
  booktitle = {Proceedings of ISSAC 2002},
  publisher = {ACM},
  year      = {2002},
  pages     = {75--83},
  doi       = {10.1145/780506.780516},
}

@inproceedings{FanWangChu24,
  author    = {Fan, Z. and Wang, G. and Chu, X.},
  title     = {DTC-SpMM: a dual-side tensor core sparse matrix multiplication engine},
  booktitle = {Proceedings of ASPLOS 2024},
  year      = {2024},
  doi       = {10.1145/3620666.3651378},
}

@article{FaugereF4,
  author  = {Faug{\`e}re, J.-C.},
  title   = {A new efficient algorithm for computing Gr{\"o}bner bases ($F_4$)},
  journal = {Journal of Pure and Applied Algebra},
  volume  = {139},
  year    = {1999},
  pages   = {61--88},
  doi     = {10.1016/S0022-4049(99)00005-5},
}

@inproceedings{FaugereF5,
  author    = {Faug{\`e}re, J.-C.},
  title     = {A new efficient algorithm for computing Gr{\"o}bner bases without reduction to zero ($F_5$)},
  booktitle = {Proceedings of ISSAC 2002},
  publisher = {ACM},
  year      = {2002},
  pages     = {75--83},
  doi       = {10.1145/780506.780516},
}

@inproceedings{GreenMcCollBader2012,
  author    = {Green, O. and McColl, R. and Bader, D. A.},
  title     = {GPU merge path: a high performance approach for merging sorted lists},
  booktitle = {Proceedings of SC 2012},
  publisher = {IEEE},
  year      = {2012},
  pages     = {1--12},
  doi       = {10.1109/SC.2012.67},
}

@inproceedings{HongKim09,
  author    = {Hong, S. and Kim, H.},
  title     = {An analytical model for a GPU architecture with memory-level and thread-level parallelism awareness},
  booktitle = {Proceedings of ISCA 2009},
  publisher = {ACM},
  year      = {2009},
  pages     = {152--163},
  doi       = {10.1145/1555754.1555771},
}

@article{KreutzerHagerWellein2014,
  author  = {Kreutzer, M. and Hager, G. and Wellein, G. and Fehske, H. and Bishop, A. R.},
  title   = {SELL-C-$\sigma$: a unified sparse matrix storage format for modern processors with wide SIMD units},
  journal = {International Journal of Parallel Programming},
  volume  = {42},
  year    = {2014},
  doi     = {10.1007/s11227-013-0864-0},
}

@article{MayrMeyer82,
  author  = {Mayr, E. and Meyer, A.},
  title   = {The complexity of the word problem for commutative semigroups and polynomial ideals},
  journal = {Advances in Mathematics},
  volume  = {46},
  year    = {1982},
  pages   = {305--329},
}

@article{MayrRitscher2013,
  author  = {Mayr, M. and Ritscher, T.},
  title   = {Degree bounds for Gr{\"o}bner bases of nonhomogeneous polynomial ideals},
  journal = {Journal of Symbolic Computation},
  volume  = {56},
  year    = {2013},
  pages   = {3--15},
  doi     = {10.1016/j.jsc.2012.10.002},
}

@article{Montgomery85,
  author  = {Montgomery, P. L.},
  title   = {Modular multiplication without trial division},
  journal = {Mathematics of Computation},
  volume  = {44},
  year    = {1985},
  pages   = {519--521},
  doi     = {10.1090/S0025-5718-1985-0777282-X},
}

@inproceedings{Montgomery95,
  author    = {Montgomery, P. L.},
  title     = {A block Lanczos algorithm for finding dependencies over $\mathbb{F}_2$},
  booktitle = {Advances in Cryptology---EUROCRYPT '95},
  editor    = {Guillou, L. C. and Quisquater, J.-J.},
  series    = {Lecture Notes in Computer Science},
  volume    = {921},
  publisher = {Springer},
  year      = {1995},
  pages     = {106--120},
}

@manual{NVIDIAcuda25,
  title        = {{CUDA C++ Programming Guide}},
  organization = {NVIDIA Corporation},
  edition      = {Release 13.1},
  year         = {2025},
  month        = dec,
  note         = {Dated Dec. 2, 2025},
  url          = {https://docs.nvidia.com/cuda/pdf/CUDA_C_Programming_Guide.pdf},
}

@inproceedings{Satish09,
  author    = {Satish, N. and Harris, M. and Garland, M.},
  title     = {Designing efficient sorting algorithms for manycore GPUs},
  booktitle = {Proceedings of IPDPS 2009},
  publisher = {IEEE},
  year      = {2009},
  doi       = {10.1109/IPDPS.2009.5161005},
}

@inproceedings{SatishHarrisGarland2009,
  author    = {Satish, N. and Harris, M. and Garland, M.},
  title     = {Designing efficient sorting algorithms for manycore GPUs},
  booktitle = {Proceedings of IPDPS 2009},
  publisher = {IEEE},
  year      = {2009},
  doi       = {10.1109/IPDPS.2009.5161005},
}

@phdthesis{Schreyer80,
  author = {Schreyer, F.-O.},
  title  = {Die Berechnung von Syzygien mit dem verallgemeinerten Weierstra{\ss}-Divisionssatz},
  school = {Universit{\"a}t Hamburg},
  year   = {1980},
  note   = {Dissertation},
}

@inproceedings{ShivdikarSEED22,
  author    = {Shivdikar, Kaustubh and Jonatan, Gilbert and Mora, Evelio and Livesay, Neal and Agrawal, Rashmi and Joshi, Ajay and Abell\'an, Jose L. and Kim, John and Kaeli, David},
  title     = {Accelerating Polynomial Multiplication for Homomorphic Encryption on GPUs},
  booktitle = {2022 IEEE International Symposium on Secure and Private Execution Environment Design (SEED)},
  publisher = {IEEE},
  year      = {2022},
  pages     = {61--72},
  doi       = {10.1109/SEED55351.2022.00013},
  url       = {https://arxiv.org/abs/2209.01290},
  note      = {arXiv:2209.01290},
}

@inproceedings{VolkovDemmel08,
  author    = {Volkov, V. and Demmel, J. W.},
  title     = {Benchmarking GPUs to tune dense linear algebra},
  booktitle = {Proceedings of SC 2008},
  publisher = {IEEE},
  year      = {2008},
  doi       = {10.1109/SC.2008.5214720},
}

@article{Wiedemann86,
  author  = {Wiedemann, D. H.},
  title   = {Solving sparse linear equations over finite fields},
  journal = {IEEE Transactions on Information Theory},
  volume  = {32},
  year    = {1986},
  pages   = {54--62},
  doi     = {10.1109/TIT.1986.1057137},
}

\end{document}